 \numberwithin{equation}{section}
\newtheorem{theorem}{Theorem}[section]
\newtheorem{lemma}{Lemma}[section]
\newtheorem{corollary}{Corollary}[section]
\newtheorem{proposition}{Proposition}[section]
\theoremstyle{definition}
\theoremstyle{remark}
\theoremstyle{example}
\newtheorem{example}{Example}[section]
\begin{document}\numberwithin{equation}{section}
\title[On Einstein Kropina metrics*]{On Einstein Kropina metrics*}
\author{Xiaoling Zhang and Yi-Bing Shen} \thanks{*
Supported by National Nature Science Foundation in China (No.
11171297)}
 \begin{abstract}  In this paper, a characteristic condition of Einstein Kropina metrics is given. By the characteristic condition, we prove that a non-Riemannian  Kropina metric $F=\frac{\alpha^2}{\beta}$
with constant Killing form $\beta $ on an n-dimensional manifold
$M$, $n\geq 2$, is an Einstein metric if and only if
$\alpha$ is also an Einstein metric.
By using the navigation data $(h,W)$, it is proved that an
n-dimensional ($n\geq2$)  Kropina metric $F=\frac{\alpha^2}{\beta}$
is Einstein if and only if the Riemannian metric $h$ is Einstein and $W$
is a unit Killing vector field with respect to $h$. Moreover, we show that every Einstein Kropina metric
must have vanishing S-curvature, and any conformal map between Einstein Kropina metrics must be homothetic.\par
\end{abstract}
\maketitle

\vspace {1cm}

\section{Introduction}\par
Let $F$ be a Finsler metric on an $n$-dimensional manifold $M$. $F$ is
called an Einstein metric with Einstein scalar $\sigma$ if
\begin{equation}\label{za1}
Ric=\sigma  F^2,
\end{equation}
where  $\sigma=\sigma(x)$ is a scalar function on $M$. In
particular,  $F$ is said to be Ricci constant (resp. Ricci flat) if
$F$ satisfies \eqref{za1} where $\sigma =$const. (resp. $\sigma=0$).
\par

Recently, some progress has been made on Finsler Einstein metrics of
$(\alpha,\beta)$ type. The $(\alpha,\beta)$-metrics form an
important class of Finsler metrics appearing iteratively in
formulating Physics, Mechanics, Seismology, Biology, Control Theory,
etc.(see \cite{anto,raf,ya}). D. Bao and C. Robles have shown that
every Einstein Randers metric of dimension $n(\geq 3)$ is
necessarily Ricci constant. A $3$-dimensional Randers metric is
Einstein if and only if it is of constant flag curvature, see
\cite{bao1}. For every non-Randers $(\alpha,\beta)$-metric
$F=\alpha\phi(s),s=\frac{\beta}{\alpha}$ with a polynomial function
$\phi(s)$ of degree greater than 2, Cheng has proved that it is an
Einstein metric if and only if  it is Ricci-flat( \cite{cheng}).\par

The Kropina metric is an $(\alpha,\beta)$-metric where $\phi
(s)=1/s$, i.e., $F={\alpha^2}/{\beta}$, which was considered by V.K.Kropina firstly(\cite{Kr}).  Such a metric is of physical interest
in the sense that it describes the general dynamical system
represented by a Lagrangian function (cf. \cite{As}), although it
has the singularity. Some recent progress on Kropina metrics has
been made, e.g., see \cite{raf,ya,yo}.

The purpose of this paper is to investigate  Einstein Kropina
metrics $F=\frac{\alpha^2}{\beta}$, for which we shall restrict our
consideration to the domain where $\beta=b_i(x)y^i>0$. By using a
complicated computation, we obtain the characteristic conditions of
Einstein Kropina metrics in Theorem \ref{zza1} and Theorem
\ref{zza2}, which generalize and improve the resuts of \cite{rez}.

For an $(\alpha,\beta)$-metrics, the form $\beta$ is said to be Killing (resp. closed) form if
$r_{ij}=0$\,\,(resp. $s_{ij}=0$). $\beta$ is said to be a
constant Killing form if it is a Killing form and has constant
length with respect to $\alpha$, equivalently $r_{ij}=0,s_i=0$.  And
accordingly, a vector field $W$ in a Riemannian manifold $(M,h)$ is said to be a constant Killing vector field if
it is a Killing vector field  and has constant length with respect to the Riemannian metric $h$.
\par For $(\alpha,\beta)$-metrics with constant Killing form, by using the characteristic condition of Einstein Kropina metrics,  we have the following
theorem.

 \begin{theorem}\label{zza2}
Let $F=\frac{\alpha^2}{\beta}$ be a non-Riemannian  Kropina metric
with constant Killing form $\beta $ on an n-dimensional manifold
$M$, $n\geq 2$. Then $F$ is an Einstein metric if and only if
$\alpha$ is also an Einstein metric. In this case, $
\sigma=\frac{1}{4}\lambda b^2\geq0$, where $\lambda=\lambda(x)$ is
the Einstein scalar of $\alpha$. Moreover, $F$ is Ricci constant
when $n\geq3$.
\end{theorem}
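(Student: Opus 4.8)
The plan is to derive everything from the characteristic condition of Theorem \ref{zza1} by feeding in the constant-Killing-form hypothesis. Recall that ``$\beta$ is a constant Killing form'' means $r_{ij}=0$ and $s_i=0$; these also give $r_i=0$, $r=0$, $b^2=\mathrm{const}$, and they force $b^is_{ij}=0$ together with $b_{i|j}=s_{ij}$ skew-symmetric. Substituting $r_{ij}=0$ and $s_i=0$ into the characteristic equation of Theorem \ref{zza1} annihilates every term carrying a factor $r_{ij}$, $r_i$ or $s_i$, leaving an identity in $y$ built only from the Ricci tensor $\overline{Ric}_{ij}$ of $\alpha$, the quadratic term $s_{ik}s^k{}_j$, the tensors $a_{ij}$ and $b_ib_j$, the forms $\alpha^2=a_{ij}y^iy^j$ and $\beta=b_iy^i$, and the unknown scalar $\sigma$.

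I would then clear the powers of $\beta$, turning the condition into a polynomial identity in $y$, and extract independent equations by separating its rational and irrational parts and matching coefficients (this is where the quadratic form $\alpha^2$ and the linear form $\beta$ being algebraically independent is essential). The outcome should be a tensorial identity of the shape $\overline{Ric}_{ij}=c_1\,a_{ij}+c_2\,b_ib_j+c_3\,s_{ik}s^k{}_j$, whose scalar coefficients $c_1,c_2,c_3$ depend on $\sigma$ and $b^2$. Using the constant-Killing-form relations — in particular $b^is_{ij}=0$ and the contraction rules for $s_{ij}$ and $s^i{}_j$ — I would argue that the $b_ib_j$- and $s_{ik}s^k{}_j$-contributions must collapse into a multiple of $a_{ij}$, so that $\overline{Ric}_{ij}=\lambda\,a_{ij}$ for some scalar $\lambda=\lambda(x)$, i.e. $\alpha$ is Einstein, and that matching the coefficients then forces $\sigma=\tfrac14\lambda b^2$. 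The converse is a direct verification: substituting $\overline{Ric}_{ij}=\lambda a_{ij}$ back into the specialized condition reproduces $Ric=\sigma F^2$ with the same $\sigma$.

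For the sign and the constancy I would argue geometrically. Applying the Bochner identity to the Killing field $\beta^\sharp=b^i\partial_i$, and using that $b^2$ is constant while $b_{i|j}=s_{ij}$ is skew, one obtains $\overline{Ric}_{ij}b^ib^j=s_{ij}s^{ij}\ge 0$; combined with the Einstein relation $\overline{Ric}_{ij}b^ib^j=\lambda b^2$ this gives $\lambda b^2=s_{ij}s^{ij}\ge 0$, whence $\sigma=\tfrac14\lambda b^2=\tfrac14 s_{ij}s^{ij}\ge 0$. Finally, for $n\ge 3$ Schur's lemma for Riemannian Einstein metrics forces $\lambda$ to be constant; since $b^2$ is constant as well, $\sigma$ is constant and $F$ is Ricci constant. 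For $n=2$ Schur fails, which is precisely why the Ricci-constant conclusion is asserted only for $n\ge 3$.

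The step I expect to be the main obstacle is the reduction in the second paragraph: showing that the tensorial part of the separated identity genuinely forces the proportionality $\overline{Ric}_{ij}=\lambda a_{ij}$, so that the $b_ib_j$- and $s_{ik}s^k{}_j$-terms cannot survive on their own. This is not generic linear algebra — it depends essentially on the constant-Killing-form identities $b^is_{ij}=0$ and the skew-symmetry $b_{i|j}=s_{ij}$ — and it is simultaneously the step that pins down the precise constant $\tfrac14$ in $\sigma=\tfrac14\lambda b^2$.
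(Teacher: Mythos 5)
Your overall architecture matches the paper's: specialize the Einstein characterization to $r_{ij}=0$, $s_i=0$, force $\overline{Ric}=\lambda\alpha^2$ from the resulting polynomial identity, read off $\sigma$, and use a Killing-field identity plus Schur's lemma for the sign and the constancy. Your Bochner argument $\overline{Ric}_{ij}b^ib^j=s_{ij}s^{ij}\ge 0$ is correct and is exactly the paper's Lemma \ref{zzd1} (third identity, with $c=0$ and $s_k=0$, giving $\lambda b^2=-s^i_{\,\,j}s^j_{\,\,i}=\|s_{ij}\|^2_{\alpha}$), and the Schur step for $n\ge 3$ is the right way to get Ricci constancy, since $b^2$ is constant. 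For the converse you will also need the companion identity $s^k_{\,\,0|k}=\lambda\beta$ (the contracted Ricci identity for a Killing form, the first identity of Lemma \ref{zzd1}), which you do not mention.

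The genuine gap is the step you yourself flag as the main obstacle. After substituting $r_{ij}=0$, $s_i=0$ the condition reduces to
\begin{equation*}
0=\overline{Ric}\,\beta^2-s^k_{\,\,0|k}\,\beta\,\alpha^2-\Bigl\{\tfrac14 s^i_{\,\,j}s^j_{\,\,i}+\sigma\Bigr\}\alpha^4,
\end{equation*}
and the conclusion $\overline{Ric}=\lambda\alpha^2$ is a one-line divisibility argument: $\alpha^2$ divides $\overline{Ric}\,\beta^2$, and since $\alpha^2$ is an irreducible quadratic (by positive definiteness) while $\beta^2$ is a product of linear forms, $\alpha^2$ must divide $\overline{Ric}$. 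No tensorial identity of the form $\overline{Ric}_{ij}=c_1a_{ij}+c_2b_ib_j+c_3s_{ik}s^k_{\,\,j}$ arises along the way: matching coefficients of the quartic in $y$ only yields a symmetrized relation $\overline{Ric}_{(ij}b_kb_{l)}=\cdots$, from which one cannot ``divide by $b_kb_l$'' at the tensor level, and the collapse of the $b_ib_j$- and $s_{ik}s^k_{\,\,j}$-terms that you hope to extract from $b^is_{ij}=0$ is not justified (contracting with $b^ib^j$ kills those terms only in the single direction $b$). So the route you sketch for the key step does not go through as described; replacing it with the irreducibility/coprimality argument repairs the proof. Once $\overline{Ric}=\lambda\alpha^2$ is in hand, the same argument applied to $0=\{\lambda\beta-s^k_{\,\,0|k}\}\beta-\{\tfrac14 s^i_{\,\,j}s^j_{\,\,i}+\sigma\}\alpha^2$ gives $\sigma=-\tfrac14 s^i_{\,\,j}s^j_{\,\,i}$, and your Bochner identity then converts this to $\tfrac14\lambda b^2\ge 0$, exactly as the paper does via Lemma \ref{zzd1}.
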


{\bf {Remark.}}\, B. Rezaei, etc., also discussed Einstein Kropina
metrics with constant Killing form. Unfortunately, the computation
and results in \cite{rez} are wrong. Theorem \ref{zza2} is the
corrected version of Theorem 4.6 and Corollary 4.9 of \cite{rez}.

As is well known, a Finsler metric is of Randers type if and only if
it is a solution of the navigation problem on a Riemannian manifold,
see \cite{bao2}. Inspired by this idea, we can prove that there is a
one-to-one correspondence between a Kropina metric and a pair
$(h,W)$, where $h$ is a Riemannian metric and $W$ is a vector field
on $M$ with the length $||W||_{h}=1$. And we call this pair $(h,W)$
the navigation data of the Kropina metric (see Section \ref{Z5} for
details). The new perspective allows us to characterize Einstein
Kropina spaces as follows.

\begin{theorem}\label{zza3}
Let $F=\frac{\alpha^2}{\beta}$ be a non-Riemannian Kropina metric on
an n-dimensional manifold $M$, $n\geq 2$. Assume the pair $(h,W)$ is
it's navigation data. Then $F$ is an Einstein metric if and only if
$h$ is an Einstein metric and $W$ is a unit Killing vector field. In
this case, $\sigma=\delta\geq0 $, where $\delta=\delta (x)$ is the
Einstein scalar of $h$. Moreover, $F$ is Ricci constant for
$n\geq3$.
\end{theorem}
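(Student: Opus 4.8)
The plan is to transport the Einstein and Killing conditions across the navigation correspondence of Section~\ref{Z5} and then quote Theorem~\ref{zza2}. The key is to work in the navigation-adapted representation of $F$: given the navigation data $(h,W)$ with $\|W\|_h=1$, I would represent the Kropina metric as $F=\alpha^2/\beta$ with
\begin{equation*}
a_{ij}=h_{ij},\qquad b_i=2W_i\ \ (W_i:=h_{ij}W^j),
\end{equation*}
so that $\alpha^2=h_{ij}y^iy^j$, $\beta=2h_{ij}W^iy^j$ and $b^2:=a^{ij}b_ib_j=4\|W\|_h^2=4$. This choice is what makes the correspondence transparent: here $\alpha$ is literally the metric $h$, so ``$\alpha$ Einstein'' coincides with ``$h$ Einstein'' with the same Einstein scalar; and since the Levi-Civita connection of $a$ is that of $h$ and $b_i=2W_i$, one has $r_{ij}=W_{i;j}+W_{j;i}=(\mathcal L_Wh)_{ij}$, while $b^2=4$ is automatically constant and then $s_i=0$. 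Thus the dictionary
\begin{equation*}
[\,h\ \text{Einstein and}\ W\ \text{unit Killing}\,]\ \Longleftrightarrow\ [\,\alpha\ \text{Einstein and}\ \beta\ \text{constant Killing}\,]
\end{equation*}
holds unconditionally in this representation.

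With the dictionary in hand, the reverse implication of the theorem is immediate. Assuming $h$ is Einstein with $\mathrm{Ric}_h=\delta h$ and $W$ is a unit Killing field, the dictionary gives that $\alpha$ is Einstein with scalar $\lambda=\delta$ and that $\beta$ is a constant Killing form. Theorem~\ref{zza2} then applies and shows $F$ is Einstein with $\sigma=\tfrac14\lambda b^2=\tfrac14\delta\cdot4=\delta\ge0$, and Ricci constant for $n\ge3$.

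For the forward implication, assume $F$ is Einstein. The substantive step is to show that $\beta$ is a constant Killing form, i.e. $r_{ij}=0$ and $s_i=0$; this is precisely where I would invoke the characteristic condition of Einstein Kropina metrics (Theorem~\ref{zza1}), together with the fact that an Einstein Kropina metric has vanishing $S$-curvature, to force these identities in the adapted representation. Once $\beta$ is known to be constant Killing, Theorem~\ref{zza2} makes $\alpha$ (hence $h$) Einstein, and the dictionary translates $r_{ij}=0$ back into $\mathcal L_Wh=0$; the normalization $\|W\|_h=1$ is built into the navigation data, so $W$ is a unit Killing field. The scalar relation $\sigma=\delta\ge0$ and Ricci-constancy for $n\ge3$ are inherited from Theorem~\ref{zza2} exactly as above.

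The main obstacle is the forward direction, and inside it the single hard point is deducing $r_{ij}=0$ and $s_i=0$ from the Einstein equation $\mathrm{Ric}=\sigma F^2$: everything else reduces to the clean dictionary above and a citation of Theorem~\ref{zza2}. Two points warrant care: first, choosing the representation $a_{ij}=h_{ij}$ (rather than a conformally rescaled one) is what keeps the translation free of conformal-change terms; second, one must check that the constant-Killing property extracted from Theorem~\ref{zza1} is the one matching this representation, so that the hypotheses of Theorem~\ref{zza2} are genuinely met.
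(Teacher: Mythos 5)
Your proposal follows the same overall route as the paper: both directions are reduced to Theorem \ref{zza2} by showing that the Einstein condition forces the defining one-form to be a constant Killing form, the decisive input being the condition $r_{00}=c(x)\alpha^2$ extracted from Theorem \ref{zza1}. The genuine difference is your choice of representative. The paper starts from an arbitrary pair $(\alpha,\beta)$ with $F=\alpha^2/\beta$, so its navigation data satisfies $h^2=e^{2\rho}\alpha^2$, $2W_0=e^{2\rho}\beta$, $e^{2\rho}b^2=4$, and it must pay for the conformal factor with Lemma \ref{zze3} (translating $r_{00}=c\alpha^2$ into $\mathcal{R}_{ij}=0$ via the relation $W^k\rho_k=-\tfrac12 c$) and with a rescaling $F=\tfrac12\,h^2/W_0$ when quoting Theorem \ref{zza2}. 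Your choice $a_{ij}=h_{ij}$, $b_i=2W_i$ sets $\rho=0$, makes $\alpha=h$ and $r_{ij}=W_{i;j}+W_{j;i}$ on the nose, and turns $\sigma=\tfrac14\lambda b^2=\delta$ into a one-line computation; this is a real simplification that eliminates Lemma \ref{zze3} entirely.

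One caveat on the forward direction, which you correctly identify as the only substantive step but leave unexecuted: the appeal to vanishing $S$-curvature cannot do the work you assign to it. Theorem \ref{zza4} is itself a consequence of $r_{00}=c\alpha^2$ (the formula $S=\tfrac{n+1}{b^2}(r_0-\tfrac{\beta}{\alpha^2}r_{00})$ vanishes identically once $r_{ij}=ca_{ij}$), so it adds no information beyond Theorem \ref{zza1} and in particular does not force $c=0$. What actually closes the argument is the normalization $b^2=4$ built into your representative: from $r_{ij}=ca_{ij}$ one gets $r_k=cb_k$, constancy of $b^2$ gives $0=(b^2)_{|k}=2(r_k+s_k)$, hence $s_k=-cb_k$, and then
\begin{equation*}
0=b^ks_k=b^kb^is_{ik}=-c\,b^2=-4c,
\end{equation*}
so $c=0$, whence $r_{ij}=0$ and $s_k=0$, i.e.\ $\beta$ is a constant Killing form and $W$ is a unit Killing field. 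With this two-line computation inserted in place of the $S$-curvature appeal, your argument is complete and matches the paper's conclusion, including $\sigma=\delta\ge 0$ and Ricci constancy for $n\ge 3$ via Theorem \ref{zza2}.
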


For the S-curvature with respect to the Busemann-Hausdorff volume
form, we have the followings.
\begin{theorem} \label{zza4}Every Einstein Kropina metric
$F=\frac{\alpha^2}{\beta}$ has vanishing $S$-curvature.\end{theorem}

Finally, we discuss conformal rigidity for Einstein Kropina
 metrics.
\begin{theorem}\label{zza5}
Any conformal map between Einstein Kropina spaces must be homothetic.
\end{theorem}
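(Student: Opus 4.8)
The plan is to reduce the statement to a pair of Einstein Kropina metrics on a single manifold and then read everything off the navigation description of Theorem \ref{zza3}. If $\varphi\colon (M,F)\to(\bar M,\bar F)$ is a conformal map between Einstein Kropina spaces, then $\varphi^{*}\bar F=e^{\kappa}F$ for some $\kappa\in C^{\infty}(M)$, and $\varphi^{*}\bar F$ is again an Einstein Kropina metric since both properties are diffeomorphism invariant. Writing $F=\alpha^{2}/\beta$, one checks at once that $e^{\kappa}F=\tilde\alpha^{2}/\tilde\beta$ with $\tilde\alpha=e^{\kappa}\alpha$ and $\tilde\beta=e^{\kappa}\beta$, so the conformal image is genuinely Kropina. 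Thus it suffices to prove: if $F$ and $\tilde F:=e^{\kappa}F$ are both Einstein Kropina metrics on $M$ ($n\ge 2$), then $\kappa$ is constant.

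First I would track the navigation data through the conformal change. Set $b^{2}=a^{ij}b_{i}b_{j}$. From $\tilde a_{ij}=e^{2\kappa}a_{ij}$ and $\tilde b_{i}=e^{\kappa}b_{i}$ one gets $\tilde b^{2}=b^{2}$, so $b^{2}$ is a conformal invariant. In the correspondence of Section \ref{Z5} the navigation metric $h$ is a fixed multiple of $b^{-2}\alpha^{2}$ and the navigation field $W$ is the corresponding multiple of the $\alpha$-dual $a^{ij}b_{j}$, normalized so that $\|W\|_{h}=1$. Feeding the conformal relations into these formulas and using $\tilde b^{2}=b^{2}$, the multiples are unchanged and one obtains the transformation law
\begin{equation*}
\tilde h=e^{2\kappa}h,\qquad \tilde W=e^{-\kappa}W.
\end{equation*}
In words: a conformal rescaling of the Kropina metric induces a conformal change of the navigation Riemannian metric together with an exactly compensating rescaling of the navigation vector field.

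Now I invoke Theorem \ref{zza3}. Because $F$ is Einstein Kropina, $W$ is a unit Killing field for $h$, i.e. $\mathcal{L}_{W}h=0$; because $\tilde F$ is Einstein Kropina, $\tilde W$ is a unit Killing field for $\tilde h$, i.e. $\mathcal{L}_{\tilde W}\tilde h=0$. Expanding the second Lie derivative with $\tilde h=e^{2\kappa}h$ and $\tilde W=e^{-\kappa}W$, all the derivatives of $h$ and of $W$ reassemble into the factor $e^{\kappa}(\mathcal{L}_{W}h)_{ij}=0$, leaving only the terms carrying a derivative of $\kappa$:
\begin{equation*}
2\,W(\kappa)\,h_{ij}-\kappa_{i}W_{j}-\kappa_{j}W_{i}=0,
\end{equation*}
where $W(\kappa)=W^{k}\kappa_{k}$, $\ W_{i}=h_{ij}W^{j}$ and $\kappa_{i}=\partial_{i}\kappa$.

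Finally I would extract $\kappa$ from this identity. Tracing with $h^{ij}$ gives $2nW(\kappa)=2W(\kappa)$, hence $W(\kappa)=0$ for $n\ge 2$; the identity then collapses to $\kappa_{i}W_{j}+\kappa_{j}W_{i}=0$, and contracting with $W^{j}$, using $\|W\|_{h}^{2}=W^{j}W_{j}=1$ and $W(\kappa)=0$, yields $\kappa_{i}=0$. Therefore $d\kappa=0$, so $\kappa$ is constant and the conformal map is homothetic. I expect the only genuine bookkeeping hurdle to be the second step: verifying the navigation transformation law $\tilde h=e^{2\kappa}h,\ \tilde W=e^{-\kappa}W$ and then carrying the powers of $e^{\kappa}$ correctly through the Lie derivative so that the $\mathcal{L}_{W}h$ part cancels cleanly. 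The conceptual point worth flagging is that the unit-length normalization $\|W\|_{h}=1$, always available from the navigation data, is precisely what reduces the residual equation to $d\kappa=0$, and the trace step needs only $n\ge 2$, so no dimensional restriction beyond the standing hypothesis is incurred.
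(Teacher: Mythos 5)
Your proposal is correct and follows essentially the same route as the paper: both reduce to the navigation data, show that a conformal change of the Kropina metric induces $\tilde h=e^{2\kappa}h$ with a compensating rescaling of $W$ (using the conformal invariance of $b^2$), and then exploit that both $W$ and $\tilde W$ are unit Killing fields to obtain $2W(\kappa)h_{ij}=\kappa_iW_j+\kappa_jW_i$, which forces $d\kappa=0$ by tracing and contracting with $W$. The only cosmetic difference is that you phrase the Killing condition via Lie derivatives while the paper works with the conformal transformation of the Levi-Civita connection and the symmetrized covariant derivative $\widetilde W_{j,k}+\widetilde W_{k,j}$.
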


The content of this paper is arranged as follows. In \S \ref{Z3} we
introduce essential curvatures of Finsler metrics, as well as
notations and conventions. And we compute the Ricci curvature of
Kropina metrics. The characterization of Einstein Kropina metrics,
i.e., Theorem \ref{zza1}, is obtained in \S \ref{Z4}. By using it,
we obtain Theorem \ref{zza2}. And in \S \ref{Z5} the navigation
version of Theorem \ref{zza1} (Theorem
 \ref{zza3}) is proved. In \S \ref{Z6} we investigate the $S$-curvature of
Kropina metrics and Theorem 1.3 is proved. In the last Section the
conformal rigidity for Einstein Kropina metrics is given.

\vspace{8mm}

\vspace{8mm}

\section{Ricci curvature of Kropina metrics}\label{Z3}

Let $F$ be a Finsler metric on an $n$-dimensional manifold $M$ and
$G^i$ be the geodesic coefficients of $F$, which are defined by
\begin{equation*}
G^i:=\frac{1}{4}g^{il}\{[F^2]_{x^ky^l}y^k-[F^2]_{ x^l}\}.
\end{equation*}
For any $x\in M$ and $y\in T_xM \backslash\{0\}$
, the Riemann curvature
$\textbf{R}_y:=R^i_{\,\,\,k}\frac{\partial}{\partial x^i}\bigotimes
dx^k$ is defined by
\begin{equation*}
 R^i_{\,\,\,k}:=2\frac{\partial G^i}{\partial x^k}-\frac{\partial^2 G^i}{\partial x^m\partial y^k}y^m
 +2G^m \frac{\partial^2G^i}{\partial
 y^m\partial y^k}
 -\frac{\partial G^i}{\partial y^m}\frac{\partial G^m}{\partial
 y^k}.
\end{equation*}
Ricci curvature is the trace of the Riemann curvature, which is
defined by
\begin{equation*}
Ric:= R^m_{\,\,\, m}.
\end{equation*}

By definition, an $(\alpha,\beta)$-metric on $M$ is expressed in the
form $F=\alpha\phi(s)$, $s=\frac{\beta}{\alpha}$,  where
$\alpha=\sqrt{a_{ij}(x)y^iy^j}$  is a positive definite Riemannian
metric, $\beta=b_i(x)y^ i$ a 1-form. It is known that
$(\alpha,\beta)$-metric with $||\beta_x||_{\alpha}<b_0$ is a Finsler
metric if and only if $\phi=\phi(s)$ is a positive smooth function
on an open interval $(-b_0,b_0)$ satisfying the following condition:
\begin{equation*}
\phi(s)-s\phi'(s)+(b^2-s^2)\phi''(s)>0, ~~~~\forall|s|\leq b<b_0,
\end{equation*}
see \cite{CHEN}.

Let
\begin{equation*}\begin{aligned}
r_{ij}=\frac{1}{2}(b_{i|j}+b_{j|i}),~~~~s_{ij}=\frac{1}{2}(b_{i|j}-b_{j|i}),~~~~~~~~~~~~~~~~~~~~~~~~~~~~~~~~
\end{aligned}\end{equation*}
where $ "|" $ denotes the covariant derivative with respect to the
Levi-Civita connection of $\alpha$. Denote
\begin{equation*}\begin{aligned}
&r^i_{\,\,j}:= a^{ik}r_{kj}, ~~~~~~~~~~~ r_j:=b^ir_{ij}, ~~~~~~~~~~~ r:=r_{ij}b^ib^j=b^jr_j,\\
&s^i_{\,\,j}:= a^{ik}s_{kj}, ~~~~~~~~~~~ s_j:=b^is_{ij},\\
\end{aligned}\end{equation*}
where$(a^{ij}):=(a_{ij})^{-1}$ and $b^i:=a^{ij}b_j$. Denote
$r^i:=a^{ij}r_j, s^i:=a^{ij}s_j$, $r_{i0}:=r_{ij}y^j,
s_{i0}:=s_{ij}y^j$, $r_{00}:=r_{ij}y^iy^j, r_0:=r_iy^i$ and
$s_0:=s_iy^i$.

 Let $G^i$ and $\bar{G^i}$ be the
geodesic coefficients of $F$ and $\alpha$, respectively. Then we
have the following lemma.
\begin{lemma}[\cite{li}] \label{Lemma 1} For an
$(\alpha,\beta)$-metric $F =\alpha\phi(s),$ $s
=\frac{\beta}{\alpha}$, the geodesic coefficients $G^i$ are given by
\begin{equation}\label{zb1}
G^i=\bar{G^i} +\alpha Q s^i_{\,\,0} +\Psi(r_{00}-2\alpha
Qs_0)b^i+\frac{1}{\alpha}\Theta(r_{00}-2\alpha Qs_0)y^i,
\end{equation}
where
\begin{equation*}\begin{aligned}
&Q:=\frac{\phi'}{\phi-s\phi'},\\
&\Psi:=\frac{\phi''}{2[\phi-s\phi'+(b^2-s^2)\phi'']},\\
&\Theta:=\frac{\phi\phi'-s(\phi\phi''+\phi'\phi')}{2\phi[\phi-s\phi'+(b^2-s^2)\phi'']}.\\
\end{aligned}\end{equation*}
\end{lemma}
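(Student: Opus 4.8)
The plan is to compute $G^i$ directly from its definition $G^i=\frac14 g^{il}\{[F^2]_{x^ky^l}y^k-[F^2]_{x^l}\}$, treating $F^2=\alpha^2\phi(s)^2$ as a function of $(x,y)$ through $\alpha$, $\beta$ and $s=\beta/\alpha$. First I would assemble the fundamental tensor. Writing $y_i:=a_{ij}y^j$, the chain rule gives $\alpha_{y^i}=y_i/\alpha$ and $s_{y^i}=b_i/\alpha-sy_i/\alpha^2$, hence $F_{y^i}=\phi'b_i+(\phi-s\phi')\,y_i/\alpha$. Differentiating once more and using $g_{ij}=F_{y^i}F_{y^j}+FF_{y^iy^j}$ yields the standard expansion
\begin{equation*}
g_{ij}=\rho\,a_{ij}+\rho_0\,b_ib_j+\rho_1(b_iy_j+b_jy_i)+\rho_2\,y_iy_j,
\end{equation*}
where $\rho=\phi(\phi-s\phi')$ and $\rho_0,\rho_1,\rho_2$ are explicit functions of $s$ and $b^2$.

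Second, I would invert this tensor. Since $g_{ij}$ equals $\rho\,a_{ij}$ up to a rank-two perturbation spanned by $b_i$ and $y_i$, its inverse is a multiple of $a^{ij}$ plus correction terms along $b^i$ and $y^i$; a Sherman--Morrison style computation (or direct verification of $g^{il}g_{lj}=\delta^i_j$) produces $g^{il}$ whose denominators carry the Finsler positivity factor $\phi-s\phi'+(b^2-s^2)\phi''$ that appears in $\Psi$ and $\Theta$. In parallel I would handle the $x$-derivatives: expanding $[F^2]_{x^l}$ and $[F^2]_{x^ky^l}y^k$ brings in $\partial a_{ij}/\partial x$ and $\partial b_i/\partial x$. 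Isolating the purely Riemannian part reproduces $\bar{G^i}$, while converting ordinary derivatives of $b_i$ into covariant ones via $b_{i|j}=\partial_jb_i-\bar\gamma^k_{ij}b_k$ turns the remainder into the tensors $r_{ij}$ and $s_{ij}$, so that after contraction with $y$ only $r_{00}$, $s_{i0}$ and $s_0$ survive.

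Finally, I would contract with $g^{il}$. Because $g^{il}$ is a multiple of $a^{il}$ plus terms along $b^i$ and $y^i$, the correction distributes precisely along the three tensorial directions $s^i_{\,\,0}$, $b^i$ and $y^i$ that appear in \eqref{zb1}; the $a^{il}$-part acting on the antisymmetric piece supplies $\alpha Q\,s^i_{\,\,0}$, and the combined action on $r_{00}$ and $s_0$ supplies the $b^i$- and $y^i$-terms. Collecting coefficients and simplifying against the positivity denominator identifies them with $Q$, $\Psi$ and $\Theta$ exactly.

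The main obstacle is the bookkeeping in this last contraction: ensuring that the $b^i$- and $y^i$-components collapse into the precise rational expressions defining $\Psi$ and $\Theta$, rather than into some equivalent but unsimplified form. I would tame this using homogeneity — Euler's relations $F_{y^i}y^i=F$ and $g_{ij}y^j=FF_{y^i}$, together with the contractions $b^iy_i=\beta$ and $y^iy_i=\alpha^2$, sharply reduce the number of independent scalar quantities that must be tracked, so the final step reduces to a short rational-function identity in the single variable $s$.
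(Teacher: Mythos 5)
The paper does not prove this lemma at all --- it is quoted verbatim from \cite{li} --- so there is no internal proof to compare against; your plan is exactly the standard derivation found in that literature (expand $g_{ij}=\rho a_{ij}+\cdots$ with $\rho=\phi(\phi-s\phi')$, invert by a rank-two Sherman--Morrison update whose denominator is the positivity factor $\phi-s\phi'+(b^2-s^2)\phi''$, covariantize the $x$-derivatives so that the purely Riemannian part yields $\bar{G}^i$ and only $r_{00}$, $s^i_{\,\,0}$, $s_0$ survive, then match coefficients along $s^i_{\,\,0}$, $b^i$, $y^i$), and it is sound. The only caution is notational: once you expand $g_{ij}$ in terms of $y_i$ rather than $\alpha_{y^i}=y_i/\alpha$, the coefficients $\rho_1,\rho_2$ carry explicit powers of $\alpha^{-1}$ and are not functions of $(s,b^2)$ alone --- harmless for the argument, but worth stating precisely before the final identification with $Q$, $\Psi$ and $\Theta$.
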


From now on we consider a special kind of $(\alpha,\beta)$-metrics
which is called Kropina-metric with the form
\begin{equation*}
F=\alpha \phi(s),\qquad \phi(s):=s^{-1},\qquad s=\frac{\alpha}{\beta}.
\end{equation*} Throughout the paper we shall restrict our
consideration to the domain where $\beta=b_i(x)y^i>0$, so that
$s>0$.

Now we get the Ricci curvature of Kropina metric by using Lemma
\ref{Lemma 1}.

\begin{proposition}\label{zzc1}
For the Kropina metric $F=\frac{\alpha^2}{\beta}$, its geodesic
coefficients are:
\begin{equation}\label{zc1}
G^i=\bar{G}^i -\frac{\alpha^2}{2\beta}s^i_{\,\,0}
+\frac{1}{2b^2}(\frac{\alpha^2 }{\beta}s_0+r_{00})b^i
-\frac{1}{b^2}(s_0+\frac{ \beta}{\alpha^2}r_{00})y^i.
\end{equation} \end{proposition}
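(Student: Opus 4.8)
The plan is to specialize the general formula \eqref{zb1} of Lemma \ref{Lemma 1} to $\phi(s)=1/s$, so that the whole proof amounts to evaluating the three coefficient functions $Q$, $\Psi$, $\Theta$ for this particular $\phi$ and then rewriting everything in terms of $\alpha$ and $\beta$ through $s=\beta/\alpha$. First I would record the derivatives $\phi=s^{-1}$, $\phi'=-s^{-2}$, $\phi''=2s^{-3}$, and compute the two auxiliary expressions that occur in the denominators. A short calculation gives $\phi-s\phi'=2s^{-1}$ and, crucially, $\phi-s\phi'+(b^2-s^2)\phi''=2b^2s^{-3}$, the latter because the $s^{-1}$ contributions cancel.

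With these in hand, $Q=\phi'/(\phi-s\phi')=-\tfrac12 s^{-1}$, so that $\alpha Q=-\alpha^2/(2\beta)$ after substituting $s=\beta/\alpha$; this yields the first correction term $-\tfrac{\alpha^2}{2\beta}s^i_{\,\,0}$. Likewise $\Psi=\phi''/\bigl(2[\phi-s\phi'+(b^2-s^2)\phi'']\bigr)=2s^{-3}/(4b^2s^{-3})=1/(2b^2)$, which is constant in $s$ and already matches the coefficient of $b^i$ in \eqref{zc1}.

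The step requiring the most care is $\Theta$. I would expand its numerator $\phi\phi'-s(\phi\phi''+(\phi')^2)$ term by term as $-s^{-3}-3s^{-3}=-4s^{-3}$, and divide by the denominator $2\phi[\phi-s\phi'+(b^2-s^2)\phi'']=4b^2s^{-4}$ to obtain the clean value $\Theta=-s/b^2=-\beta/(b^2\alpha)$. Substituting $Q$, $\Psi$, $\Theta$ into \eqref{zb1}, I would use $-2\alpha Qs_0=\tfrac{\alpha^2}{\beta}s_0$ to rewrite the common factor as $r_{00}-2\alpha Qs_0=r_{00}+\tfrac{\alpha^2}{\beta}s_0$, turning the $b^i$-term into $\tfrac{1}{2b^2}\bigl(\tfrac{\alpha^2}{\beta}s_0+r_{00}\bigr)b^i$. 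For the $y^i$-term, $\tfrac1\alpha\Theta=-\beta/(b^2\alpha^2)$, and expanding it against $r_{00}+\tfrac{\alpha^2}{\beta}s_0$ produces exactly $-\tfrac{1}{b^2}\bigl(s_0+\tfrac{\beta}{\alpha^2}r_{00}\bigr)y^i$, which assembles into \eqref{zc1}.

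There is no genuine conceptual obstacle: the identity is a direct substitution. The only places an error could slip in are the cancellation giving $\phi-s\phi'+(b^2-s^2)\phi''=2b^2s^{-3}$ and the sign-and-power bookkeeping in $\Theta$, so I would verify both the sign and the exponent of $s$ at those two points before translating back through $s=\beta/\alpha$.
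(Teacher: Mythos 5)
Your proposal is correct and follows exactly the route the paper intends: the paper's own proof is the one-line remark that \eqref{zc1} follows from \eqref{zb1} by direct computation, and you have simply carried out that computation, with all the key values ($Q=-\tfrac{1}{2}s^{-1}$, $\Psi=\tfrac{1}{2b^2}$, $\Theta=-s/b^2$, and the cancellation giving $\phi-s\phi'+(b^2-s^2)\phi''=2b^2s^{-3}$) checking out.
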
\begin{proof}
By a direct computation, we can get \eqref{zc1} from \eqref{zb1}.
\end{proof}

\begin{proposition}\label{zzc2} For the Kropina metric
$F=\frac{\alpha^2}{\beta}$, the Ricci curvature  of $F$ is given by
\begin{equation}\label{zc2}
Ric=\overline{Ric}+T,
\end{equation}
where $\overline{Ric}$ denotes the Ricci curvature  of $\alpha$, and
\begin{equation}\label{zc3}
\begin{aligned}
T=&-\frac{\alpha^2}{b^4\beta}s_0r -\frac{r}{b^4}r_{00}
+\frac{\alpha^2}{b^2\beta}b^ks_{0|k}
+\frac{1}{b^2}b^kr_{00|k}+\frac{n-2}{b^2}s_{0|0}
+\frac{n-1}{b^2\alpha^2}\beta
r_{00|0}+\frac{1}{b^2}(\frac{\alpha^2}{\beta}s_0
+r_{00})r^k_{\,\,\,k}\\
& -\frac{\alpha^2}{\beta}s^k_{\,\,\,0|k} -\frac{1}{b^2}r_{0|0}
-\frac{2(2n-3)}{b^4}r_0s_0-\frac{n-2}{b^4}s_0^2
-\frac{4(n-1)}{b^4\alpha^2}\beta
r_{00}r_0+\frac{2(n-1)}{b^4\alpha^2}\beta r_{00}s_0\\
& +\frac{3(n-1)}{b^4\alpha^4}\beta^2r_{00}^2
+\frac{2n}{b^2}s^k_{\,\,\,0}r_{0k} +\frac{1}{b^4}r_0^2
-\frac{\alpha^2}{b^2\beta}s^k_{\,\,\,0}r_{k}
+\frac{n-1}{b^2\beta}\alpha^2s^k_{\,\,\,0}s_{k}-\frac{\alpha^4}{2b^2\beta^2}s^ks_{k}
-\frac{\alpha^2}{b^2\beta}s^kr_{0k}\\
& -\frac{\alpha^4}{4\beta^2}s^j_{\,\,\,k}s^k_{\,\,\,j}.
\end{aligned}
\end{equation}\end{proposition}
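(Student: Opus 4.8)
The plan is to read off $Ric = R^m_{\;m}$ directly from the geodesic spray of Proposition \ref{zzc1}, since the Riemann tensor $R^i_{\;k}$ is completely determined by the $G^i$ through the definition recalled at the beginning of this section. First I would split off the Riemannian part by writing $G^i = \bar G^i + P^i$, where
\[
P^i := -\frac{\alpha^2}{2\beta}s^i_{\;0} + \frac{1}{2b^2}\Big(\frac{\alpha^2}{\beta}s_0 + r_{00}\Big)b^i - \frac{1}{b^2}\Big(s_0 + \frac{\beta}{\alpha^2}r_{00}\Big)y^i
\]
gathers the non-Riemannian deviation terms of \eqref{zc1}. Since $\bar G^i$ is the spray of the Riemannian metric $\alpha$, its trace contributes exactly the term $\overline{Ric}$ in \eqref{zc2}, so the whole task reduces to showing that the remaining contributions assemble into the tensor $T$ of \eqref{zc3}.

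To organize those contributions I would invoke the standard deviation identity for the Ricci curvatures of two sprays differing by $P^i$, which in traced form reads
\[
Ric = \overline{Ric} + 2\,P^m_{\;|m} - y^j\big(P^m_{\;|j}\big)_{\cdot m} + 2\,P^j P^m_{\;\cdot j\cdot m} - P^m_{\;\cdot j}P^j_{\;\cdot m},
\]
where $|$ denotes the horizontal covariant derivative with respect to the Levi-Civita connection of $\alpha$ and $\cdot$ the vertical ($y$-)derivative. The value of this form is that it replaces the bare $\partial_{x^k}G^i$ occurring in the definition of $R^i_{\;k}$ by $\alpha$-covariant derivatives, under which the building blocks $s^i_{\;0}, s_0, r_{00}, b^i, y^i$ transform tensorially; here one repeatedly uses $\alpha_{|k}=0$, $y^i_{\;|k}=0$, $a_{ij|k}=0$, and $b_{i|j}=r_{ij}+s_{ij}$.

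Carrying out the substitution then amounts to three blocks of computation. The horizontal divergence term $2P^m_{\;|m}$ and the mixed term $y^j(P^m_{\;|j})_{\cdot m}$ produce the first-order covariant-derivative expressions $b^k s_{0|k}$, $b^k r_{00|k}$, $s_{0|0}$, $r_{00|0}$, $s^k_{\;0|k}$ and $r_{0|0}$ of \eqref{zc3}; in differentiating these one must also hit the scalar prefactor $1/b^2$, using $(b^2)_{|k}=2(r_k+s_k)$, which is what generates the $1/b^4$ terms such as $r_0^2/b^4$ and $r_0 s_0/b^4$. The two quadratic terms $2P^jP^m_{\;\cdot j\cdot m}$ and $P^m_{\;\cdot j}P^j_{\;\cdot m}$, together with the vertical derivatives of the rational prefactors $\alpha^2/\beta$ and $\beta/\alpha^2$, supply the explicit powers of $\alpha$ and $\beta$ and the algebraic products $s_0 r$, $r\,r_{00}$, $r_{00}r_0$, $r_{00}s_0$, $r_{00}^2$, $r_0 s_0$, $s_0^2$, $r_0^2$, $s^k_{\;0}r_{0k}$, $s^k_{\;0}s_k$, $s^k r_{0k}$, $s^ks_k$ and $s^j_{\;k}s^k_{\;j}$. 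Throughout the contraction I would use the antisymmetry identities $s_{ij}y^iy^j=0$, $a^{ik}s_{ik}=0$, $b^is_i=0$ and $y^is_i=s_0$ to eliminate vanishing traces.

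The difficulty is entirely one of bookkeeping rather than of ideas. The dimension-dependent coefficients $n-2$, $2(2n-3)$, $3(n-1)$ and so on only emerge after the traces of the second vertical derivatives and of the quadratic terms are combined, so the same monomial in $(\alpha,\beta,s_\bullet,r_\bullet)$ typically receives contributions from several different blocks and must be collected before the final structure becomes visible. I expect the genuinely delicate part to be keeping the mixed term $y^j(P^m_{\;|j})_{\cdot m}$ consistent with the purely vertical quadratic terms, since an error in a single power of $\alpha$ or a single sign there propagates through essentially every line of \eqref{zc3}.
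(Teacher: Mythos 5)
Your proposal follows exactly the paper's route: decompose the spray as $G^i=\bar G^i+T^i$ with the deviation $T^i$ read off from Proposition \ref{zzc1}, and apply the standard four-term Ricci deviation formula $Ric=\overline{Ric}+2T^k_{\,|k}-y^jT^k_{\,\cdot k|j}+2T^jT^k_{\,\cdot j\cdot k}-T^k_{\,\cdot j}T^j_{\,\cdot k}$ (your ordering $y^j(T^k_{\,|j})_{\cdot k}$ agrees with the paper's since horizontal and vertical derivatives of the Berwald connection of a Riemannian $\alpha$ commute after the trace), then collect the four blocks. This matches the paper's proof, which likewise reduces everything to "a direct computation" of those same four terms.
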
\begin{proof}
Let
\begin{equation*}\begin{aligned}
T^i:=-\frac{\alpha^2}{2\beta}s^i_{\,\,0}
+\frac{1}{2b^2}(\frac{\alpha^2 }{\beta}s_0+r_{00})b^i
-\frac{1}{b^2}(s_0+\frac{ \beta}{\alpha^2}r_{00})y^i,
\end{aligned}\end{equation*}
then
\begin{equation*}
G^i=\bar{G}^i +T^i.
\end{equation*}
Thus the Ricci curvature of $F$ is related to the Ricci curvature of
$\alpha$ by
\begin{equation}\label{zc4}
Ric=\overline{Ric}+2T^k_{\,\,\,|k}-y^jT^k_{\,\,\,.\,k|j}+2T^jT^k_{\,\,\,.\,j\,.\,k}-T^k_{\,\,\,.\,j}T^j_{\,\,\,.\,k},
\end{equation}
where $"|"$ and  $"."$  denote the horizontal covariant derivative
and vertical covariant derivative with respect to the Berwald
connection determined by $\bar{G^i}$  respectively.

Note that\begin{equation*}
\begin{aligned}
\beta_{|k}=r_{0k}+s_{0k},\qquad b^2_{\,\,\,|k}=2(r_k+s_k),\qquad b^i_{\,\,|k}=r^i_{\,\,k}+s^i_{\,\,k}.\\
\end{aligned}
\end{equation*}
 By a direct computation, we get
\begin{equation*}
\begin{aligned}
2T^k_{\,\,\,|k}=&-\frac{2\alpha^2}{b^4\beta}s_0r-\frac{2}{b^4}r_{00}r+(\frac{4}{b^4}-\frac{\alpha^2}{b^2\beta^2})r_0s_0
+(\frac{4}{b^4}+\frac{\alpha^2}{b^2\beta^2})s_0^2+\frac{4\beta}{b^4\alpha^2}r_{00}r_0~~~~~~~~~~~~~~~~~~~~~
~~~~~~~~~~~~~~~~~~~~~~~~\\
&+\frac{4\beta}{b^4\alpha^2}r_{00}s_0
+\frac{\alpha^2}{b^2\beta}b^ks_{0|k}
+\frac{1}{b^2}b^kr_{00|k}-\frac{2}{b^2}s_{0|0}
-\frac{2\beta}{b^2\alpha^2}r_{00|0}
\\
&+\frac{1}{b^2}(\frac{\alpha^2}{\beta}s_0
+r_{00})r^k_{\,\,\,k}-\frac{2}{b^2\alpha^2}r_{00}^2
+\frac{\alpha^2}{\beta^2}s^k_{\,\,\,0}(r_{0k}+s_{0k})-\frac{\alpha^2}{\beta}s^k_{\,\,\,0|k},\\
-y^jT^k_{\,\,\,.\,k|j}=&\frac{2}{b^4}r_0^2-\frac{2(n-1)}{b^4}r_{0}s_0-\frac{1}{b^2}r_{0|0}
-\frac{2(n+1)\beta}{b^4\alpha^2}r_{00}r_0-\frac{2(n+1)\beta}{b^4\alpha^2}r_{00}s_0
~~~~~~~~~~~~~~~~~~~~~~~~~~~~~~~~~~~~~~~~~~~~~~~~~~~~~~~~~~~~~~~~~~~~~~\\
&+\frac{n+1}{b^2\alpha^2}r_{00}^2+\frac{n+1}{b^2\alpha^2}\beta
r_{00|0}-\frac{2n}{b^4}s_0^2+\frac{n}{b^2}s_{0|0},\\
2T^jT^k_{\,\,\,.\,j\,.\,k}
=&\frac{1}{b^4}(\frac{\alpha^2}{\beta}s_0+r_{00})r
-\frac{2(n+2)\beta}{b^4\alpha^2}(\frac{\alpha^2}{\beta}s_0+r_{00})
r_0 -\frac{\alpha^2}{b^2\beta}s^k_{\,\,\,0}r_k
+\frac{2n}{b^4}s_0^2~~~~~~~~~~~~~~~~~~~~~~~~~~~~~~~~~~~~~~~~~~~~~~~~~~~~~~~~~~~~~~~~~~~~~~\\
& +\frac{2(3n+2)\beta}{b^4\alpha^2}r_{00}s_0
 +\{\frac{4(n+1)\beta^2}{b^4\alpha^4}-\frac{n+1}{b^2\alpha^2}\}r_{00}^2
+\frac{n\alpha^2}{b^2\beta}s^k_{\,\,\,0}s_k
+\frac{2(n+1)}{b^2}s^k_{\,\,\,0}r_{0k},
\end{aligned}
\end{equation*}
\begin{equation*}
\begin{aligned}
T^k_{\,\,\,.\,j}T^j_{\,\,.\,k}
&=(\frac{\alpha^2}{b^2\beta^2}+\frac{n+2}{b^4})s_0^2
+\frac{\alpha^2}{b^2\beta}s^k_{\,\,\,0}s_k
+(\frac{2}{b^2}+\frac{\alpha^2}{\beta^2})r_{0k}s^k_{\,\,\,0}
+\frac{\alpha^2}{\beta^2}s_{0k}s^k_{\,\,\,0}~~~~~~~~~~~~~~~~~~~~~~~~~~~~~~~~~~~~~~~~~~~~~~~~~~~~~~~~~~~~~~~~~~~~~~
\\
&+\frac{2(n+4)}{b^4\alpha^2}\beta r_{00}s_0
-(\frac{\alpha^2}{b^2\beta^2}+\frac{4}{b^4})r_{0}s_0
+\frac{\alpha^4}{2b^2\beta^2}s^ks_k +\frac{1}{b^4}r_0^2
 -\frac{6\beta}{b^4\alpha^2}r_{00}r_0\\
 &
+\frac{\alpha^2}{b^2\beta}r_{0k}s^k+(-\frac{2}{b^2\alpha^2}+\frac{(n+7)\beta^2}{b^4\alpha^4})r_{00}^2
+\frac{\alpha^4}{4\beta^2}s^i_{\,\,\,j}s^j_{\,\,\,i}.
\end{aligned}
\end{equation*}
Plugging all of these four terms into \eqref{zc4}, we obtain
\eqref{zc2}. This completes the proof.\end{proof}

{\bf {Remark.}}\, For Riemann curvature and the Ricci curvature of
$(\alpha,\beta)$-metrics, L. Zhou gave some formulas in \cite{zhou}.
However, Cheng has corrected some errors of his formulas in
\cite{cheng}. To avoid making such mistakes, we use the definitions
of Riemann curvature and Ricci curvatures to compute it.

From now on, $"|"$ and  $"."$  denote the horizontal covariant
derivative and vertical covariant derivative with respect to the
Berwald connection determined by $\bar{G^i}$, respectively.

\vspace{8mm}
\section{Equivalent equations of Einstein Kropina metrics}\label{Z4}

The following lemma is necessary for the proof of theorems.

\begin{lemma}\label{zzd1}For $(\alpha,\beta)$-metrics with $r_{00}=c(x)\alpha^2$,
if $\alpha$ is an Einstein, i.e.,
$\overline{Ric}=\lambda(x)\alpha^2$ for some function
$\lambda=\lambda(x)$, then the followings hold
\begin{equation*}\begin{cases}
\begin{aligned}
s^i_{\,\,\,0|i}&=(n-1)c_0+\lambda \beta,\\
b^ks^i_{\,\,\,k|i}&=(n-1)b^kc_k+\lambda b^2,\\
0&=(n-1)b^kc_k+\lambda
b^2+s^k_{\,\,\,|k}+s^k_{\,\,\,j}s^j_{\,\,\,k},\end{aligned}\end{cases}
\end{equation*}
where $c_k:=\frac{\partial c}{\partial x^k}$ and
$c_0:=c_ky^k$.\end{lemma}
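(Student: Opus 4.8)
The plan is to translate the two hypotheses into tensor equations on the base Riemannian manifold and then feed them through the Ricci (commutation) identity of the Levi-Civita connection of $\alpha$. First I would note that $r_{00}=c(x)\alpha^2$ is equivalent, by polarization of the symmetric forms, to $r_{ij}=c\,a_{ij}$, so that $r^i{}_{\,j}=c\,\delta^i_j$ and $r^k{}_{\,k}=nc$. Combined with $b_{i|j}=r_{ij}+s_{ij}$ this yields the clean starting identity $b_{i|j}=c\,a_{ij}+s_{ij}$, which I will differentiate once more.

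The core step is to take a second covariant derivative $b_{i|j|k}=c_k a_{ij}+s_{ij|k}$, antisymmetrize in $j,k$, and invoke the Ricci identity $b_{i|j|k}-b_{i|k|j}=-b_m\bar R^m{}_{\,ijk}$, giving $c_k a_{ij}-c_j a_{ik}+s_{ij|k}-s_{ik|j}=-b_m\bar R^m{}_{\,ijk}$. Contracting with $a^{ik}$ collapses the left side: the two $c$-terms give $(1-n)c_j$, the term $a^{ik}s_{ik|j}$ vanishes by skew-symmetry, and $a^{ik}s_{ij|k}$ becomes the divergence $s^i{}_{\,j|i}$. On the right, $a^{ik}\bar R^m{}_{\,ijk}$ reduces to $-\bar R^m{}_{\,j}$, so inserting the Einstein condition $\bar R_{lj}=\lambda a_{lj}$ turns the right side into $\lambda b_j$. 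The outcome is the master relation $s^i{}_{\,j|i}=(n-1)c_j+\lambda b_j$; contracting it with $y^j$ (which is horizontally parallel) gives exactly the first identity $s^i{}_{\,0|i}=(n-1)c_0+\lambda\beta$.

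With the master relation in hand the remaining two identities are short. Contracting $s^i{}_{\,j|i}=(n-1)c_j+\lambda b_j$ with $b^j$, without differentiating $b^j$, immediately produces $b^k s^i{}_{\,k|i}=(n-1)b^k c_k+\lambda b^2$, the second identity. For the third, I would expand the divergence of $s^k=b^i s_i{}^{\,k}$ by the product rule, $s^k{}_{\,|k}=b^i{}_{|k}s_i{}^{\,k}+b^i s_i{}^{\,k}{}_{|k}$. Here $b^i{}_{|k}=c\,\delta^i_k+s^i{}_{\,k}$ kills the $c$-part against the vanishing trace of $s$ and leaves $s^i{}_{\,k}s_i{}^{\,k}=-s^k{}_{\,j}s^j{}_{\,k}$, while $s_i{}^{\,k}{}_{|k}=-s^k{}_{\,i|k}$ is supplied by the master relation, contributing $-(n-1)b^k c_k-\lambda b^2$. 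Collecting terms gives $s^k{}_{\,|k}=-s^k{}_{\,j}s^j{}_{\,k}-(n-1)b^k c_k-\lambda b^2$, which rearranges into the third identity.

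The step I expect to be the main obstacle is the sign and convention bookkeeping in the curvature contraction, namely verifying $a^{ik}\bar R^m{}_{\,ijk}=-\bar R^m{}_{\,j}$ and pinning down the sign in the Ricci identity, since an error there would propagate an overall sign and spoil the matching with the Einstein scalar. The other delicate point is tracking the raising conventions for the skew tensor, in particular the relations $s_i{}^{\,k}=-s^k{}_{\,i}$ and $s^i{}_{\,k}s_i{}^{\,k}=-s^k{}_{\,j}s^j{}_{\,k}$, because a dropped minus sign would alter the quadratic term $s^k{}_{\,j}s^j{}_{\,k}$ in the third identity.
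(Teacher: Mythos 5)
Your proposal is correct and follows essentially the same route as the paper: both derive the master relation $s^i_{\ j|i}=(n-1)c_j+\lambda b_j$ by applying the Ricci identity to $b_{i|j}=c\,a_{ij}+s_{ij}$ and contracting with the metric, then obtain the first two identities by contracting with $y^j$ and $b^j$, and the third by comparing with the product-rule expansion of $b^j s^k_{\ j|k}$ (equivalently, of $s^k_{\ |k}$). The sign conventions you flag as delicate do work out exactly as in the paper's computation.
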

\begin{proof}
Let $\beta$ satisfy $r_{00}=c(x)\alpha^2$. Then
\begin{equation}\label{zd1}\begin{cases}
\begin{aligned}
b^js^k_{\,\,\,j|i}&=(b^js^k_{\,\,\,j})_{\,|i}-b^j_{\,\,\,|i}s^k_{\,\,\,j}
=-s^k_{\,\,\,|i}-(r^j_{\,\,\,i}+s^j_{\,\,\,i})s^k_{\,\,\,j}=-s^k_{\,\,\,|i}-cs^k_{\,\,\,i}-s^k_{\,\,\,j}s^j_{\,\,\,i},\\
b^js^k_{\,\,\,j|k}&=-s^k_{\,\,\,|k}-s^k_{\,\,\,j}s^j_{\,\,\,k}.
\end{aligned}\end{cases}
\end{equation}

Assume that $\alpha$ is an Einstein metric with Einstein scalar
$\lambda(x)$. Since $\alpha$ is a Riemann metric, we have the Ricci
identity, i.e., $ b_{j|k|l}-b_{j|l|k}=b^s\bar{R}_{jskl}$, where
$\bar{R}_{jskl}$ denotes the Riemann curvature of $\alpha$.
Contracting both sides of it with $a^{jl}$, we get
\begin{equation*}
\begin{aligned}
a^{jl}(b_{j|k|l}-b_{j|l|k})&=b^l_{\,\,\,|k|l}-b^l_{\,\,\,|l|k}=(r^l_{\,\,\,k}+s^l_{\,\,\,k})_{|l}
-(r^l_{\,\,\,l}+s^l_{\,\,\,l})_{|k}=-(n-1)c_k+s^l_{\,\,\,k|l}\\
&=b^sa^{jl}\bar{R}_{jskl}=b^s\overline{R}_{sk}=\lambda b^s
a_{sk}=\lambda b_k,
\end{aligned}
\end{equation*}
that is
\begin{equation}\label{zd2}
s^l_{\,\,\,k|l}=(n-1)c_k+\lambda b_k.
\end{equation}
This is equivalent to the following identity
\begin{equation*}
s^k_{\,\,\,0|k}=(n-1)c_0+\lambda \beta.
\end{equation*}
Contracting \eqref{zd2} with $b^k$, we get
$b^ks^l_{\,\,\,k|l}=(n-1)b^kc_k+\lambda b^2.$ Comparing it with the
second equation of \eqref{zd1}, we obtain that
\begin{equation*}
0=(n-1)b^kc_k+\lambda b^2+s^k_{\,\,\,|k}+s^k_{\,\,\,j}s^j_{\,\,\,k}.
\end{equation*}This completes the proof.
\end{proof}

Using Proposition \ref{zzc2} and Lemma \ref{zzd1}, we can obtain the
necessary and sufficient conditions for Kropina metrics to be
Einstein metrics.

\begin{theorem}\label{zza1}
Let $F=\frac{\alpha^2}{\beta}$ be the non-Riemann Kropina metric on
an $n$-dimensional manifold $M$. \par
 1) For $n=2$, $F$ is an Einstein metric
if and only if there exist scalar functions $c=c(x),
~\lambda=\lambda(x)$ on $M$ such that $\alpha$ and $\beta$ satisfy
the following equations
\begin{equation}\label{za2}\begin{cases}\begin{aligned}
r_{00}&=c\alpha^2,\\
\overline{Ric}&=\lambda\alpha^2,\\
0&=\lambda~b^2\beta-c s_0+ b^kc_k\beta+
b^ks_{0|k}-b^2 s^k_{\,\,\,0|k}+s^k_{\,\,\,0}s_k.\\
\end{aligned}\end{cases}\end{equation}\par
2) For $n\geq3$, $F$ is an Einstein metric if and only if there
exist scalar functions $c=c(x),~f=f(x)$ on $M$ such that $\alpha$
and $\beta$ satisfy the following equations
\begin{equation}\label{za3}\begin{cases}\begin{aligned}
r_{00}&=c\alpha^2,\\
f\alpha^2&=\overline{Ric}~b^4 +(n-2)\{b^2s_{0|0}
+b^2c_0\beta-2c\beta s_0
-s_0^2-c^2\beta^2\},\\
0&=\{(n-2)s_ks^k-b^2s^k_{\,\,\,|k}-b^2s^i_{\,\,j}s^j_{\,\,i}\}\beta+(n-3)b^2c
s_0 +b^2 b^ks_{0|k}-b^4 s^k_{\,\,\,0|k}+(n-1)b^2 s^k_{\,\,\,0}s_k,\\
\end{aligned}\end{cases}\end{equation}
 where
\begin{equation}\label{za4}
f=-(n-2)b^2c^2-b^2 b^kc_k+(n-2)s^ks_k-b^2
s^k_{\,\,\,|k}-b^2s^i_{\,\,j}s^j_{\,\,i}.
\end{equation}\par
In this case, $
\sigma=-\frac{1}{2b^2}s^ks_k-\frac{1}{4}s^i_{\,\,j}s^j_{\,\,i}$ for
$n\geq2$.
\end{theorem}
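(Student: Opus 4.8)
The plan is to convert the Einstein condition into a polynomial identity in $y$ and then repeatedly exploit that $\alpha^2=a_{ij}y^iy^j$ is an irreducible quadratic form coprime to the linear form $\beta=b_iy^i$. By Proposition~\ref{zzc2}, $F$ is Einstein if and only if
\[
\overline{Ric}+T=\sigma\,\frac{\alpha^4}{\beta^2},
\]
with $T$ given by \eqref{zc3}. Every term of \eqref{zc3} carries only even powers of $\alpha$ (the common denominator being $\alpha^4\beta^2$), so multiplying through by $\alpha^4\beta^2$ yields a genuine polynomial identity whose ingredients are the quadratic form $\overline{Ric}$, powers of $\alpha^2$, and powers of $\beta$. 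No irrational ($\alpha$-odd) part appears, so the whole argument is one of polynomial divisibility in $\mathbb{R}[y^1,\dots,y^n]$, where $\alpha^2$ is prime (it has no real linear factor, $(a_{ij})$ being positive definite) and $\gcd(\alpha^2,\beta)=1$.

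Two facts come out immediately and are independent of the dimension. Reducing the identity modulo $\alpha^2$, every contribution is divisible by $\alpha^2$ except the one coming from $\tfrac{3(n-1)}{b^4\alpha^4}\beta^2 r_{00}^2$, which becomes $\tfrac{3(n-1)}{b^4}\beta^4 r_{00}^2$. Hence $\alpha^2\mid\beta^4 r_{00}^2$, and since $\alpha^2$ is prime and coprime to $\beta$ we get $\alpha^2\mid r_{00}^2$, hence $\alpha^2\mid r_{00}$; as $r_{00}$ is quadratic this forces $r_{00}=c(x)\alpha^2$ (here $n\ge2$ is used, so that $3(n-1)\ne0$). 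Reducing the same identity modulo $\beta$, the only surviving terms are $-\tfrac{\alpha^8}{2b^2}s^ks_k-\tfrac{\alpha^8}{4}s^j_{\,k}s^k_{\,j}$ on the left and $\sigma\alpha^8$ on the right; as $\alpha^8$ is coprime to $\beta$ this gives the stated value $\sigma=-\tfrac{1}{2b^2}s^ks_k-\tfrac14 s^i_{\,j}s^j_{\,i}$.

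Next I would feed $r_{00}=c\alpha^2$ back into \eqref{zc3} together with its formal consequences $r_{ij}=ca_{ij}$, $r^i_{\,j}=c\delta^i_j$, $r_0=c\beta$, $r=cb^2$, $r^k_{\,k}=nc$, $r_{00|0}=c_0\alpha^2$, $r_{0|0}=c_0\beta+c^2\alpha^2$ and $s_{00}=0$. This substitution cancels every negative power of $\alpha$ in $T$, and the two remaining $\alpha^4/\beta^2$ terms are precisely $\sigma\alpha^4/\beta^2$, so they drop out of the Einstein equation. After multiplying by $\beta$, what remains has the shape
\[
\alpha^2 A+\beta B=0,
\]
where $A=A(y)$ is linear and $B=B(y)$ is quadratic in $y$. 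Because $\gcd(\alpha^2,\beta)=1$, the relation $\beta\mid\alpha^2A$ forces $\beta\mid A$; being linear, $A=k\beta$ for a scalar $k=k(x)$, and then $B=-k\alpha^2$. These two relations are the remaining equations of the theorem.

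It remains to unwind $A=k\beta$ and $B=-k\alpha^2$ into the stated forms. Contracting $A=k\beta$ with $b^i$ (using $b^is_i=0$, the identity $b^ib^ks_{i|k}=s^ks_k$, and \eqref{zd1} to rewrite $b^is^k_{\,i|k}$) determines $k$ and yields $f=-kb^4$ in the explicit form \eqref{za4}; the full content of $A=k\beta$ then gives the third equation, while $b^4B=-kb^4\alpha^2=f\alpha^2$ gives the second. The factor $(n-2)$ carried by $B$ is exactly what distinguishes the two cases: for $n\ge3$ one reads off \eqref{za3}, whereas for $n=2$ the $(n-2)$-terms vanish, $B$ collapses to $\overline{Ric}$, and $B=-k\alpha^2$ becomes $\overline{Ric}=\lambda\alpha^2$ with $\lambda=-k$ (automatically consistent in dimension two), giving \eqref{za2}. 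The converse follows by reversing the substitution and verifying $\overline{Ric}+T=\sigma\alpha^4/\beta^2$ from \eqref{zc2}--\eqref{zc3}. The main labor, and the step most prone to error, is this last substitution-and-simplification of the lengthy expression \eqref{zc3}: one must track the cancellations that collapse it to $\alpha^2A+\beta B$ and contract correctly to pin down $f$. The conceptual core, by contrast, is light---just the primality of $\alpha^2$ and its coprimality with $\beta$.
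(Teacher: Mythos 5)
Your proposal is correct and follows essentially the same route as the paper: multiply the Einstein equation through by $b^4\alpha^4\beta^2$ to get a polynomial identity, use the irreducibility of $\alpha^2$ and its coprimality with $\beta$ to force $r_{00}=c\alpha^2$, substitute, and split the residual identity of the form $\alpha^2 A+\beta B=0$ into the stated equations by the same divisibility reasoning. The only cosmetic difference is that you read off $\sigma=-\frac{1}{2b^2}s^ks_k-\frac{1}{4}s^i_{\,j}s^j_{\,i}$ at the outset by reducing modulo $\beta$, whereas the paper arrives at the same formula at the end of each case via the $\beta\eta=f\alpha^2$ argument.
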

\begin{proof} Let $F=\frac{\alpha^2}{\beta}$ be an Einstein metric with
Einstein scalar $\sigma(x)$. Multiplying both sides of \eqref{zc2}
by $b^4\alpha^4\beta^2$ to remove the denominators, we provide the
criterion for the Kropina metric to be an Einstein metric as follows
\begin{equation}\label{zd3}
\begin{aligned}
0=&3(n-1)\beta^4r_{00}^2
+(n-1)\{b^2r_{00|0}-4r_{00}r_0+2r_{00}s_0\}\beta^3\alpha^2\\
+&\{\overline{Ric}~b^4-r_{00}r+b^2b^kr_{00|k}+(n-2)b^2s_{0|0}
+b^2r_{00}r^k_{\,\,k}\\
&-b^2r_{0|0}-(4n-6)r_0s_{0}-
(n-2)s_{0}^2+2nb^2r_{0k}s^k_{\,\,0}+r_0^2\}\beta^2\alpha^4\\
+&\{- s_{0}r+b^2 b^ks_{0|k}+b^2 s_{0}r^k_{\,\,\,k} -b^4
s^k_{\,\,\,0|k}-b^2
s^k_{\,\,\,0}r_{k}+(n-1)b^2 s^k_{\,\,\,0}s_{k}-b^2 r_{0k}s^k\}\beta\alpha^6\\
 -&b^2\{\frac{1}{2}s^ks_k+\frac{b^2}{4}s^i_{\,\,j}s^j_{\,\,i}+\sigma(x)b^2\}\alpha^8.\\
\end{aligned}
\end{equation}

The above equation shows that $\alpha^2$ divides
$3(n-1)\beta^4r_{00}^2$.
 Since $\alpha^2$ is irreducible and $\beta^5$ can factor into
linear terms, we have that
 $\alpha^2$ divides $r_{00}^2$. Thus there exists a function $c(x)$ such that
\begin{equation}\label{zd4}
 r_{00}=c(x)\alpha^2,
\end{equation}
which means that  $\beta$ is a conformal form with respect to
$\alpha$.

By \eqref{zd4}, it is easy to get
\begin{equation}\label{zd5}\begin{cases}
\begin{aligned}
&r_{00}=c\alpha^2,~~~~~~r_{ij}=ca_{ij},~~~~~r_{0i}=cy_{i},~~~r_{i}=cb_{i},~~~~~~r=cb^2,~~~r^i_{\,\,\,j}=c\delta^i_{\,\,\,j},\\
&r_{0k}s^k_{\,\,\,0}=0,~~~~~r_{0k}s^k=cs_0,~~~~r_{0}=c\beta,~~~s^k_{\,\,\,0}r_{k}=cs_{0},\\
&r_{00|k}=c_k\alpha^2,~~~r_{00|0}=c_0\alpha^2,~~~r^k_{\,\,\,k}=nc,~~~r_{0|0}=c_0\beta+c^2\alpha^2,\\
\end{aligned}\end{cases}
\end{equation}
where $y_i:= a_{ij}y^j$.\par Substituting all of these into
\eqref{zd3} and dividing both sides by common factor  $\alpha^4$, we
obtain
\begin{equation}\label{zd6}
\begin{aligned}
0&=\overline{Ric}~b^4\beta^2 +(n-2)\{b^2s_{0|0}
+b^2c_0\beta-2c\beta s_0 -s_0^2-c^2\beta^2\}\beta^2\\
&+b^2\{(n-3)c s_0+(n-2)c^2\beta+ b^kc_k\beta+
b^ks_{0|k}-b^2 s^k_{\,\,\,0|k}\\
&+(n-1)s^k_{\,\,\,0}s_k\}\beta\alpha^2-b^2\{\frac{1}{2}s^ks_k+\frac{b^2}{4}s^i_{\,\,j}s^j_{\,\,i}+\sigma b^2\}\alpha^4.\\
\end{aligned}
\end{equation}

\vspace{4mm}

\textbf{Case I:} n=2. \eqref{zd6} can be simplified as
\begin{equation}\label{zd7}
\begin{aligned}
0=&\overline{Ric}~b^2\beta^2+\{-c s_0+ b^kc_k\beta+
b^ks_{0|k}-b^2 s^k_{\,\,\,0|k}+ s^k_{\,\,\,0}s_k\}\beta\alpha^2\\
&-\{\frac{1}{2}s^ks_k+\frac{b^2}{4}s^i_{\,\,j}s^j_{\,\,i}+\sigma b^2\}\alpha^4.\\
\end{aligned}
\end{equation}
Thus there exists some function $\lambda=\lambda(x)$ such that
\begin{equation}\label{zd8}
\overline{Ric}=\lambda\alpha^2,
\end{equation}
 i.e., $\alpha$ is an Einstein metric.

 We plug \eqref{zd8} into \eqref{zd7}. Then  \eqref{zd7} is equivalent to
\begin{equation}\label{zd9}\begin{cases}
\begin{aligned}
\eta&=\lambda~b^2\beta-c s_0+ b^kc_k\beta+
b^ks_{0|k}-b^2 s^k_{\,\,\,0|k}+s^k_{\,\,\,0}s_k,\\
0&=\beta\eta-\{\frac{1}{2}s^ks_k+\frac{b^2}{4}s^i_{\,\,j}s^j_{\,\,i}+\sigma b^2\}\alpha^2.\\
\end{aligned}\end{cases}
\end{equation}

From the second equation of \eqref{zd9}, we know there exists some
function $f=f(x)$ such that
\begin{equation}\label{zd10}
\beta\eta=f\alpha^2,\\
\end{equation}
where
$f=\frac{1}{2}s^ks_k+\frac{b^2}{4}s^i_{\,\,j}s^j_{\,\,i}+\sigma
b^2.$

Now we consider \eqref{zd10} into two cases: 1) If $\eta=t\beta$ for
some function $t=t(x)$ on $M$, then $t b_ib_j=fa_{ij}$. By the
theory of matrix rank, we know that $t=f=0$. So $\eta=0$; 2) If
$\eta\neq t\beta$ for any function $t=t(x)$ on $M$, then we just
choose the suitable direction $y$, such that $\eta(y)=0$. For the
positive definiteness of $\alpha$, $\alpha(y)\neq 0$, so we get
$f=0$. All in all, $f=0$ and $\eta=0.$

Thus \eqref{zd9} is equivalent to
\begin{equation}\label{zd11}\begin{cases}
\begin{aligned}
0&=\lambda~b^2\beta-c s_0+ b^kc_k\beta+
b^ks_{0|k}-b^2 s^k_{\,\,\,0|k}+s^k_{\,\,\,0}s_k,\\
\sigma&=-\frac{1}{2b^2}s^ks_k-\frac{1}{4}s^i_{\,\,j}s^j_{\,\,i}.\\
\end{aligned}\end{cases}\end{equation}

Conversely, if \eqref{za2} holds, putting them into \eqref{zc2}
yields $Ric=\sigma F^2$, where $\sigma$ is given by the second
equation of \eqref{zd11}. Thus $F$ is an Einstein metric.

\vspace{4mm}

\textbf{Case II:} $n\geq3$. From \eqref{zd6}, we know there exists
some function $f=f(x)$ such that
\begin{equation}\label{zd12}
\overline{Ric}~b^4 +(n-2)\{b^2s_{0|0}
+b^2c_0\beta-2c\beta s_0 -s_0^2-c^2\beta^2\}=f \alpha^2.\\
\end{equation}
Then \eqref{zd6} can be simplified as
\begin{equation}\label{zd13}
\begin{aligned}
0=&\beta\{(n-3)b^2c s_0+(n-2)b^2c^2\beta+b^2 b^kc_k\beta+b^2
b^ks_{0|k}-b^4 s^k_{\,\,\,0|k}+(n-1)b^2 s^k_{\,\,\,0}s_k\\
&+f\beta\}-b^2\{\frac{1}{2}s^ks_k+\frac{b^2}{4}s^i_{\,\,j}s^j_{\,\,i}+\sigma b^2\}\alpha^2.\\
\end{aligned}
\end{equation}

Since $\alpha^2$ can't be  divided by $\beta$, we see that
\eqref{zd13} is equivalent to the following equations
\begin{equation}\label{zd14}\begin{cases}\begin{aligned}
0&=(n-3)b^2c s_0+(n-2)b^2c^2\beta+b^2b^kc_k\beta +b^2 b^ks_{0|k}-b^4
s^k_{\,\,\,0|k}\\
&+(n-1)b^2 s^k_{\,\,\,0}s_k+f\beta,\\
0&=\frac{1}{2}s^ks_k+\frac{b^2}{4}s^i_{\,\,j}s^j_{\,\,i}+b^2\sigma.
\end{aligned}\end{cases}\end{equation}

 Firstly, differentiating
both sides of the first equation of \eqref{zd14} with respect to
$y^i$ yields
\begin{equation}\label{zd15}
0=(n-3)b^2c s_i+(n-2)b^2c^2b_i+b^2 b^kc_k b_i+b^2 b^ks_{i|k}-b^4
s^k_{\,\,\,i|k}+(n-1)b^2 s^k_{\,\,\,i}s_k+fb_i.
\end{equation}
Contracting \eqref{zd15} with $b^i$ gives
\begin{equation}\label{zd16}
0=(n-2)b^4c^2+b^4 b^kc_k-(n-2)b^2 s^ks_k+b^4
s^k_{\,\,\,|k}+b^4s^i_{\,\,j}s^j_{\,\,i}+b^2f.
\end{equation}
Removing the factor $b^2$ from\eqref{zd16}, we obtain
\begin{equation}\label{zd17}
f=-(n-2)b^2c^2-b^2 b^kc_k+(n-2)s^ks_k-b^2
s^k_{\,\,\,|k}-b^2s^i_{\,\,j}s^j_{\,\,i}.
\end{equation}
Plugging \eqref{zd17} into the first equation of \eqref{zd14} yields
\begin{equation*}
0=\{(n-2)s_ks^k-b^2s^k_{\,\,\,|k}-b^2s^i_{\,\,j}s^j_{\,\,i}\}\beta+(n-3)b^2c
s_0 +b^2 b^ks_{0|k}-b^4 s^k_{\,\,\,0|k}+(n-1)b^2 s^k_{\,\,\,0}s_k.
\end{equation*}

 Secondly, by the second
equation of \eqref{zd14}, we obtain the Einstein scalar
\begin{equation}\label{zd18}
\sigma=-\frac{1}{2b^2}s^ks_k-\frac{1}{4}s^i_{\,\,j}s^j_{\,\,i}.
\end{equation}

Conversely, suppose \eqref{za3} and \eqref{za4} hold. Plugging them
into \eqref{zc2}, we conclude that $F$ is an Einstein metric with
Einstein scalar $\sigma$, which is given by \eqref{zd18}. It
completes the proof of Theorem \ref{zza1}. \end{proof}
 \vskip 5mm

By Theorem \ref{zza1}, we can obtain Theorem \ref{zza2}, that is
\vspace{4mm}
\begin{theorem}\label{zzd2}
Let $F=\frac{\alpha^2}{\beta}$ be a non-Riemannian  Kropina metric
with constant Killing form $\beta $ on an n-dimensional manifold
$M$, $n\geq 2$. Then $F$ is an Einstein metric if and only if
$\alpha$ is also an Einstein metric. In this case, $
\sigma=\frac{1}{4}\lambda b^2\geq0$, where $\lambda=\lambda(x)$ is
the Einstein scalar of $\alpha$. Moreover, $F$ is Ricci constant for
$n\geq3$.
\end{theorem}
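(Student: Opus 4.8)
The plan is to substitute the constant-Killing hypothesis into the equivalent equations of Theorem \ref{zza1} and to use Lemma \ref{zzd1} to kill off the divergence terms. First I would record what ``constant Killing form'' means in coordinates: $r_{ij}=0$ and $s_i=0$. The condition $r_{ij}=0$ forces $r_{00}=c\alpha^2$ with $c\equiv 0$ (hence $c_k=0$, $r=0$, $r^i_{\,\,j}=0$), while $s_i=0$ gives $s_0=s_iy^i=0$, $s^k=a^{kj}s_j=0$ and, since $s_i$ vanishes identically, $s_{0|k}=s_{0|0}=0$ and $s^k_{\,\,|k}=0$. I would stress that $s_i=0$ does \emph{not} make $s_{ij}$ vanish, so $s^i_{\,\,j}$ and the scalar $s^i_{\,\,j}s^j_{\,\,i}$ survive. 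I would also note that $b^2=a^{ij}b_ib_j$ is constant, which is exactly $b^2_{\,\,|k}=2(r_k+s_k)=0$; this fact is held in reserve for the Ricci-constant claim.

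For the equivalence I would split into the two cases of Theorem \ref{zza1}. Suppose $F$ is Einstein. For $n=2$ the second line of \eqref{za2} is literally $\overline{Ric}=\lambda\alpha^2$, so $\alpha$ is Einstein. For $n\geq3$, inserting $c=0$, $s_0=0$, $s_{0|0}=0$ into the second equation of \eqref{za3} collapses it to $f\alpha^2=\overline{Ric}\,b^4$, whence $\overline{Ric}=(f/b^4)\alpha^2=:\lambda\alpha^2$ (using $b^2>0$), so $\alpha$ is Einstein. Conversely, assume $\alpha$ is Einstein with scalar $\lambda$. Here Lemma \ref{zzd1} applied with $c=0$ supplies the two facts I need: $s^k_{\,\,0|k}=\lambda\beta$, and from its third line together with $s^k_{\,\,|k}=0$ the scalar identity $s^i_{\,\,j}s^j_{\,\,i}=-\lambda b^2$. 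With these the first lines of \eqref{za2}, \eqref{za3} hold since $r_{00}=0$; the second lines reduce to $\overline{Ric}=\lambda\alpha^2$ and (via \eqref{za4}, now $f=-b^2s^i_{\,\,j}s^j_{\,\,i}$) to $f=\lambda b^4$; and the remaining first-order equation---$\lambda b^2\beta-b^2s^k_{\,\,0|k}=0$ when $n=2$, and $-b^2s^i_{\,\,j}s^j_{\,\,i}\beta-b^4s^k_{\,\,0|k}=0$ when $n\geq3$---reduces to $0=0$ upon inserting $s^k_{\,\,0|k}=\lambda\beta$ and $s^i_{\,\,j}s^j_{\,\,i}=-\lambda b^2$. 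By the converse half of Theorem \ref{zza1}, $F$ is Einstein.

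It remains to pin down the Einstein scalar and settle constancy. The formula $\sigma=-\frac{1}{2b^2}s^ks_k-\frac14 s^i_{\,\,j}s^j_{\,\,i}$ from Theorem \ref{zza1}, with $s^k=0$ and $s^i_{\,\,j}s^j_{\,\,i}=-\lambda b^2$, gives $\sigma=\frac14\lambda b^2$. For the sign I would evaluate the scalar $s^i_{\,\,j}s^j_{\,\,i}$ in an $\alpha$-orthonormal frame at a point, where $s^i_{\,\,j}$ equals the skew matrix $(s_{ij})$, so $s^i_{\,\,j}s^j_{\,\,i}=-\sum_{i,j}s_{ij}^2\le0$; hence $\sigma=-\frac14 s^i_{\,\,j}s^j_{\,\,i}\ge0$ and, since $b^2>0$, also $\lambda\ge0$. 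Finally, for $n\geq3$ the Riemannian Einstein condition $\overline{Ric}=\lambda\alpha^2$ forces $\lambda$ to be constant by the classical Schur lemma, while $b^2$ is constant by the constant-length hypothesis; therefore $\sigma=\frac14\lambda b^2$ is constant, i.e. $F$ is Ricci constant. (For $n=2$ Schur does not apply, consistent with the theorem only asserting Ricci-constancy for $n\geq3$.)

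The step I expect to be the genuine content, rather than bookkeeping, is verifying that the surviving divergence equation of Theorem \ref{zza1} holds automatically: this is precisely where Lemma \ref{zzd1} is indispensable, since without the integrated identities $s^k_{\,\,0|k}=\lambda\beta$ and $s^i_{\,\,j}s^j_{\,\,i}=-\lambda b^2$ one cannot see that the first-order data $s_{ij}$ and the curvature scalar $\lambda$ are forced to be compatible. The sign analysis and the appeal to Schur's lemma are then routine.
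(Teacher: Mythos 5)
Your proposal is correct and follows essentially the same route as the paper: specialize the Einstein characterization to $r_{ij}=0,\ s_i=0$, invoke Lemma \ref{zzd1} for the identities $s^k_{\,\,\,0|k}=\lambda\beta$ and $s^i_{\,\,j}s^j_{\,\,i}=-\lambda b^2$, and read off $\sigma=\frac{1}{4}\lambda b^2\geq0$ from the sign of $-s^i_{\,\,j}s^j_{\,\,i}$. The only cosmetic differences are that the paper substitutes into the intermediate equation \eqref{zd6} and reruns the divisibility argument there, whereas you plug directly into \eqref{za2}--\eqref{za3}, and that you spell out the Schur-lemma step for Ricci constancy that the paper leaves implicit.
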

\begin{proof} Assume that $F$ is an Einstein metric. Substituting  $r_{ij}=0$ and $ s_i=0$ into \eqref{zd6} and
removing the factor $b^4$, we get
\begin{equation}\label{zd19}
0=\overline{Ric}\beta^2- s^k_{\,\,\,0|k}\beta\alpha^2
-\{\frac{1}{4}s^i_{\,\,j}s^j_{\,\,i}+\sigma\}\alpha^4.
\end{equation}
Thus $\overline{Ric}$ is divisible by $\alpha^2$, i.e., there exists
a function $\lambda(x)$ such that
\begin{equation}\label{zd20}
 \overline{Ric}=\lambda\alpha^2.
\end{equation}

Putting \eqref{zd20} into \eqref{zd19} and dividing the common
factor $\alpha^2$, we conclude that
\begin{equation}\label{zd21}
0=\{\lambda\beta-s^k_{\,\,\,0|k}\}\beta
-\{\frac{1}{4}s^i_{\,\,j}s^j_{\,\,i}+\sigma\}\alpha^2.
\end{equation}
By Lemma \ref{zzd1}, we have $ s^k_{\,\,\,0|k}=\lambda \beta,\,
b^ks^i_{\,\,\,k|i}=\lambda b^2=-s^i_{\,\,j}s^j_{\,\,i}$. Thus
\eqref{zd21}  is equivalent to
\begin{equation}\label{zd22}
\sigma=-\frac{1}{4}s^i_{\,\,j}s^j_{\,\,i}=\frac{1}{4}\lambda
b^2.\end{equation}

For $\lambda
b^2=b^ks^l_{\,\,\,k|l}=-s^i_{\,\,j}s^j_{\,\,i}=\parallel
s_{ij}\parallel^2_{\alpha}\geq 0$, $\lambda$ is non negative. Thus
$\sigma=\frac{1}{4}\lambda b^2\geq0$.\par

Conversely, assume $r_{ij}=s_i=0$ and $\alpha$ is an Einstein
metric, i.e., $\overline{Ric}=\lambda(x)\alpha^2$. Then we have $
s^k_{\,\,\,0|k}=\lambda \beta,\, b^ks^i_{\,\,\,k|i}=\lambda
b^2=-s^i_{\,\,j}s^j_{\,\,i}$ by Lemma \ref{zzd1}. Putting all of
these and $r_{ij}=0,s_i=0$  into \eqref{zc2}, we obtain
$0=Ric-\sigma F^2$, where $\sigma=\frac{1}{4}\lambda b^2$. Hence $F$
is an Einstein metric. It completes the proof of Theorem
\ref{zzd2}.\end{proof}

\vspace{4mm}
\begin{corollary} \label{zzd3}Let $F=\frac{\alpha^2}{\beta}$ be a non-Riemannian  Kropina metric with $s_i=0$
on an n-dimensional manifold $M$, $n\geq 3$. If $F$ and $\alpha$ are
both Einstein metrics, then one of the followings holds\par 1)
$\beta$ is a constant Killing form. In this case,
$\sigma=\frac{1}{4}\lambda b^2\geq0$, where $\lambda=\lambda(x)$
denotes the Einstein scalar of $\alpha$.\par
 2) $\beta $ is closed.  In this case,
$\sigma=0$, i.e., $F$ is Ricci flat.
\end{corollary}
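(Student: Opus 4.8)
The plan is to feed both Einstein hypotheses into the characterization of Theorem \ref{zza1} (the case $n\geq3$) and to extract from it a pointwise product that must vanish. First I would record the simplifications forced by $s_i=0$: since the covector field $s_i$ vanishes identically we also have $s^i=0$, $s_0=0$, $s_{i|j}=0$ and $s^k_{\,\,\,|k}=0$, whence $s_{0|0}=0$ and $b^ks_{0|k}=0$. Because $F$ is Einstein, Theorem \ref{zza1} gives $r_{00}=c\alpha^2$ together with the second and third equations of \eqref{za3} and the formula \eqref{za4} for $f$; because $\alpha$ is Einstein I would write $\overline{Ric}=\lambda\alpha^2$, and here I would invoke Schur's lemma (valid since $n\geq3$) to conclude that $\lambda$ is a \emph{constant}. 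This constancy of $\lambda$ is what will ultimately turn a pointwise alternative into a global one.

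Next I would combine the simplified equations with Lemma \ref{zzd1}. Substituting $s^k_{\,\,\,0|k}=(n-1)c_0+\lambda\beta$ into the simplified third equation of \eqref{za3} yields a relation that is linear in $y$; its vanishing for all $y$ forces $c_k$ to be proportional to $b_k$, together with $(n-1)b^kc_k=-s^i_{\,\,j}s^j_{\,\,i}-\lambda b^2$. Separately, the simplified second equation of \eqref{za3} reads $(f-\lambda b^4)\alpha^2=(n-2)\beta(b^2c_0-c^2\beta)$; since $\alpha^2$ is irreducible and is not divisible by the linear form $\beta$, both sides must vanish, giving $f=\lambda b^4$ and the key identity $b^2c_k=c^2b_k$. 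Contracting the latter with $b^k$ and comparing with the previous relation produces
\begin{equation*}
(n-1)c^2=-s^i_{\,\,j}s^j_{\,\,i}-\lambda b^2=\parallel s_{ij}\parallel^2_{\alpha}-\lambda b^2 .
\end{equation*}

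The crux is then an integrability argument. Writing $c_k=g\,b_k$ and using $b_{k|l}=r_{kl}+s_{kl}=ca_{kl}+s_{kl}$, I would compute $c_{k|l}-c_{l|k}$ and impose the symmetry of the Hessian of $c$, i.e.\ $c_{k|l}=c_{l|k}$. Contracting the resulting identity with $b^k$ (using $b^ks_{kl}=s_l=0$) eliminates the gradient of $g$ and leaves $g\,s_{kl}=0$, hence $c^2s_{kl}=0$ at every point; so pointwise either $c=0$ or $s_{ij}=0$. To globalize, I would split $M$ into the open sets $\{c\neq0\}$ and $\{s_{ij}\neq0\}$: on the former $s_{ij}=0$ forces $\lambda b^2=-(n-1)c^2<0$, while on the latter $c=0$ forces $\lambda b^2=\parallel s_{ij}\parallel^2_{\alpha}>0$; since $\lambda$ is a single constant these two sets cannot both be nonempty. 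Hence either $c\equiv0$, so that $r_{ij}=0$ and $\beta$ is a constant Killing form with $\sigma=-\tfrac14 s^i_{\,\,j}s^j_{\,\,i}=\tfrac14\lambda b^2\geq0$ by Theorem \ref{zzd2}, or $s_{ij}\equiv0$, so that $\beta$ is closed and $\sigma=0$, i.e.\ $F$ is Ricci flat. The step I expect to be the main obstacle is precisely this globalization: the algebra only delivers the pointwise product $c^2s_{ij}=0$, and it is the constancy of the Einstein scalar $\lambda$ (Schur) together with the sign information in the displayed identity that excludes the mixed situation and yields the clean dichotomy.
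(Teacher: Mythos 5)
Your proposal is correct, and its first half coincides with the paper's own proof: both arguments substitute $s_i=0$ into the $n\geq3$ characterization of Theorem \ref{zza1}, use the irreducibility of $\alpha^2$ to split the resulting second equation into $f=\lambda b^4$ and $b^2c_0=c^2\beta$, and differentiate the latter to obtain the key identity $b^2c_k=c^2b_k$. Where you genuinely diverge is the endgame. The paper splits on whether $c$ is constant: a constant $c$ must vanish, giving case (1); for non-constant $c$ it writes $b_i=(b^2c^{-1})_{|i}$, so $\beta$ is exact and $s_{ij}=0$, giving case (2). That computation divides by $c$, so strictly speaking it establishes $s_{ij}=0$ only on the open set $\{c\neq0\}$, and the paper does not address the mixed situation in which $c$ vanishes on one part of $M$ and $s_{ij}$ on another. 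Your route closes exactly this gap: the Hessian-symmetry computation (writing $c_k=(c^2/b^2)b_k$, antisymmetrizing $c_{k|l}$, and contracting with $b^k$, where $b^ks_{kl}=s_l=0$ kills the gradient terms) delivers the pointwise product $c^2s_{ij}=0$ without ever dividing by $c$; and the auxiliary identity $(n-1)c^2=\parallel s_{ij}\parallel^2_{\alpha}-\lambda b^2$ --- which you extract from the third equation of \eqref{za3} via Lemma \ref{zzd1}, and which the paper holds implicitly through $f=\lambda b^4$ and \eqref{za4} --- combined with Schur's lemma forces a global dichotomy, since the constant $\lambda$ cannot be negative on a nonempty $\{c\neq0\}$ and positive on a nonempty $\{s_{ij}\neq0\}$ simultaneously. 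The price is the standard appeal to Schur's lemma and connectedness of $M$; the gain is that the alternative in the corollary is obtained globally rather than pointwise. The identification of $\sigma$ in the two cases ($\tfrac14\lambda b^2$ via Theorem \ref{zzd2}, and $0$ from $\sigma=-\tfrac14 s^i_{\,\,j}s^j_{\,\,i}$) is the same as in the paper.
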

\begin{proof}  Let $s_i$=0. Assume that Einstein scalars of $\alpha$ and $F$
are $\lambda$ and $\sigma$ respectively, i.e., $
\overline{Ric}=\lambda(x)\alpha^2$ and $Ric=\sigma(x)F^2$.

By Theorem \ref{zza1}, that $F$ is an Einstein metric with $s_i=0$
is equivalent to
\begin{equation}\label{zd24}\begin{cases}\begin{aligned}
r_{00}&=c\alpha^2,\\
f\alpha^2&=\lambda b^4\alpha^2+(n-2)\{b^2c_0-c^2\beta\}\beta,\\
0&=s^i_{\,\,j}s^j_{\,\,i}\beta+b^2 s^k_{\,\,\,0|k},\\
f&=-(n-2)b^2c^2-b^2 b^kc_k-b^2s^i_{\,\,j}s^j_{\,\,i}.
\end{aligned}\end{cases}\end{equation}
In this case, $\sigma=-\frac{1}{4}s^i_{\,\,j}s^j_{\,\,i}$.

For the same reason in discussing \eqref{zd10}, the second equation
of \eqref{zd24} is equivalent to
\begin{equation}\begin{cases}\label{zd25}
\begin{aligned}
b^2c_0&=c^2\beta,\\
f&=\lambda b^4.\\
\end{aligned}\end{cases}\end{equation}
Differentiating both sides of the first equation of \eqref{zd25} by
$y^i$ yields
\begin{equation}\label{zd26}
c^2b_i=b^2c_i.
\end{equation}

\textbf{Case I:} $c(x)=const$. We have $c=0$ by \eqref{zd26}. So
$\beta$ is a constant Killing
 form. Thus by Theorem \ref{zza2}, we have $\sigma=\frac{1}{4}\lambda b^2\geq0$.

\textbf{Case II:} $c(x)\neq const$.
 We can rewrite \eqref{zd26} as
\begin{equation*}
(c^{-1})_{\,\,|i}=- \frac{b_i}{b^2}.
\end{equation*}
So we have
\begin{equation*}
(b^2c^{-1})_{\,\,|i}=(b^2)_{\,\,|i}c^{-1}+b^2(c^{-1})_{\,\,|i}=2cb_ic^{-1}+b^2(-
\frac{b_i}{b^2})=b_i,
\end{equation*}
which means that $s_{ij}=0$. Thus $\beta$ is closed.
From \eqref{zd24}, we get $\sigma=0$.

Note that by Lemma \ref{zzd1}, the last two equations of
\eqref{zd24} always hold.\end{proof}

\vspace{8mm}

\section{Kropina Metrics Through Navigation Description}\label{Z5}

In this section, we will algebraically derive an expression for $F$,
and obtain another characterization of Einstein Kropina metric.

Notice that we restrict our consideration to the domain where
$\beta=b_i(x)y^i>0$, which is equivalent to $W_0=W_i(x)y^i>0$.

 Let $h = \sqrt{ h_{ij}(x)
y^i y^j}$ be a Riemannian metric and $W = W^
i\frac{\partial}{\partial x^i}$ a vector field on $M$. We can
determine the Finsler metric $F=F( x,y)$ as follows
\begin{equation*}
|| \frac{y}{F}- W||_{h}= \sqrt{h_{ij}(x)(\frac{y^i}{F}-
W^i)(\frac{y^j}{F}- W^j)}=1.
\end{equation*}
It is equivalent to
\begin{equation}\label{ze1}
\frac{h^2}{F^2}- 2\frac{W_0}{F}+||W||^2_{h}=1,
\end{equation}
where $W_i:=h_{ij}W^j$ and $W_0:=W_iy^i$.

Let $F=\frac{\alpha^2}{\beta}$. Solving \eqref{ze1} for $h$ and $W$,
we have that
\begin{equation}\label{ze2}
0=h^2\beta^2-2W_0\beta\alpha^2+(||W||^2_{h}-1)\alpha^4.
\end{equation}
Since $h^2\beta^2$ is divisible by $\alpha^2$, we conclude that
$h^2=e^{2\rho}\alpha^2$ for some function $\rho=\rho(x)$ on $M$.
Plugging it into \eqref{ze2} yields
\begin{equation}\label{ze3}
0=(e^{2\rho}\beta-2W_0)\beta+(||W||^2_{h}-1)\alpha^2.
\end{equation}

\eqref{ze3} is equivalent to
\begin{equation}\begin{cases}\begin{aligned}\label{zee3}
\eta&:=e^{2\rho}\beta-2W_0,\\
0&=\eta\beta+(||W||^2_{h}-1)\alpha^2.
\end{aligned}\end{cases}\end{equation}

Now we consider second equation of \eqref{zee3} into two cases: 1)
If $\eta=t\beta$ for some function $t=t(x)$ on $M$, then $t
b_ib_j=(||W||^2_{h}-1)a_{ij}$. By the theory of matrix rank, we know
that $t=||W||^2_{h}-1=0$. So $\eta=0$; 2) If $\eta\neq t\beta$ for
any function $t=t(x)$ on $M$, then we just choose the suitable
direction $y$, such that $\eta(y)=0$. For the positive definiteness
of $\alpha$, $\alpha(y)\neq 0$, so we get $||W||^2_{h}-1=0$. Above
all, $||W||_{h}-1=0$ and $\eta=0.$ So till now, we have
\begin{equation}\label{ze4}
h_{ij}=e^{2\rho}a_{ij},~~~2W_i=e^{2\rho}b_{i}~~~and~~~
e^{2\rho}b^2=4.
\end{equation}

Conversely, assume that $||W||_h= \sqrt{h_{ij}(x)W^i W^j}=1$.
Solving \eqref{ze1} for $F$, we obtain $F=\frac{h^2}{2W_0}$. Let
$\alpha^2=h^2$ and $\beta=2W_0$. Thus $F=\frac{\alpha^2}{\beta}$ is
a Kropina metric.

 Hence, we obtain the following theorem.
\begin{theorem} A Finsler metric $F$ is of Kropina
type if and only if it solves the navigation problem on some
Riemannian manifold $(M,h)$, under the influence of a wind $W$ with
$||W||_{h}=1$. Namely, $F=\frac{\alpha^2}{\beta}$ if and only if
$F=\frac{h^2}{2W_0}$, where $ h^2=e^{2\rho}\alpha^2$,
$2W_0=e^{2\rho}\beta$ and $e^{2\rho}b^2=4$.
\end{theorem}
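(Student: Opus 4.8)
The statement is a biconditional setting up the correspondence $F=\frac{\alpha^2}{\beta}\leftrightarrow(h,W)$, so the plan is to prove the two implications separately and, in each direction, to extract the explicit relations $h^2=e^{2\rho}\alpha^2$, $2W_0=e^{2\rho}\beta$, $e^{2\rho}b^2=4$. The natural starting point is the defining navigation equation $\|\tfrac{y}{F}-W\|_h=1$, which after squaring and clearing $F$ is precisely \eqref{ze1}; everything downstream is algebraic manipulation of this identity.

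For the easy direction ($\Leftarrow$), I would assume $F$ solves the navigation problem for some $(h,W)$ with $\|W\|_h=1$. Substituting $\|W\|^2_{h}=1$ into \eqref{ze1} collapses it to $\frac{h^2}{F^2}=\frac{2W_0}{F}$, whence $F=\frac{h^2}{2W_0}$; setting $\alpha^2:=h^2$ and $\beta:=2W_0$ exhibits $F$ as a Kropina metric. This direction is immediate once the navigation equation is available.

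The substance is in the forward direction ($\Rightarrow$). Given $F=\frac{\alpha^2}{\beta}$, I would posit data $(h,W)$ realizing it, substitute $F=\frac{\alpha^2}{\beta}$ into \eqref{ze1}, and clear denominators to obtain the polynomial identity \eqref{ze2} in $y$. The first key step is a divisibility argument: the two terms $-2W_0\beta\alpha^2$ and $(\|W\|^2_{h}-1)\alpha^4$ in \eqref{ze2} are visibly divisible by $\alpha^2$, so $\alpha^2$ must divide $h^2\beta^2$; since $\alpha^2$ is an irreducible quadratic form in $y$ while $\beta^2$ is a product of linear forms, $\alpha^2\nmid\beta^2$, forcing $\alpha^2\mid h^2$ and hence $h^2=e^{2\rho}\alpha^2$ for some $\rho=\rho(x)$. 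Plugging this back reduces \eqref{ze2} to \eqref{ze3}.

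The second and main obstacle is to pin down $\|W\|_h$ and the remaining relations from \eqref{ze3}. Writing $\eta:=e^{2\rho}\beta-2W_0$ as in \eqref{zee3}, the identity becomes $\eta\beta+(\|W\|^2_{h}-1)\alpha^2=0$, and I would resolve it by exactly the rank dichotomy already used in the proof of Theorem \ref{zza1} (cf.\ the discussion following \eqref{zd10}): if $\eta=t\beta$ then $t\,b_ib_j=(1-\|W\|^2_{h})a_{ij}$ equates a matrix of rank at most $1$ with a multiple of a positive-definite one, forcing $t=0$ and $\|W\|^2_{h}=1$; and if $\eta$ is not proportional to $\beta$, I would pick a direction $y$ with $\eta(y)=0$ but $\alpha(y)\neq0$, again yielding $\|W\|^2_{h}=1$. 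Either way $\eta=0$ and $\|W\|_h=1$, which is \eqref{ze4}: from $\eta=0$ we get $2W_0=e^{2\rho}\beta$, equivalently $2W_i=e^{2\rho}b_i$, and then $\|W\|^2_{h}=\tfrac14e^{2\rho}b^2=1$ gives $e^{2\rho}b^2=4$. I expect this rank/irreducibility bookkeeping to be the only delicate point; once it is settled, assembling the relations yields both the stated correspondence and its explicit form.
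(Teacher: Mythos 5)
Your proposal is correct and follows essentially the same route as the paper: clear denominators in the navigation identity \eqref{ze1} to get \eqref{ze2}, use irreducibility of $\alpha^2$ to force $h^2=e^{2\rho}\alpha^2$, and then apply the rank/proportionality dichotomy to \eqref{zee3} to conclude $\eta=0$ and $\|W\|_h=1$, with the converse obtained by solving \eqref{ze1} for $F$ under $\|W\|_h=1$. The only discrepancy is an immaterial sign (your $t\,b_ib_j=(1-\|W\|^2_h)a_{ij}$ versus the paper's $(\|W\|^2_h-1)a_{ij}$), which does not affect the conclusion.
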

And we call such a pair $(h,W)$ the navigation data of the Kropina
metric $F$.

{\bf {Remark.}}\, Similar navigation idea for Kropina metrics
appeared in \cite{yo}, where they unnaturally assumed that
$||W||_{h}=1$. As stated in \cite{bao2}, the navigation description
for Randers metrics is guaranteed by the condition $||W||_{h}<1$. In
a sense, the navigation idea for Kropina metrics may be considered
to be the limiting case of Randers metrics, as $||W||_{h}$
approaches to 1.

In order to prove Theorem \ref{zza3}, we first need to reexpress the
Einstein Kropina characterization of Theorem \ref{zza1} in terms of
the navigation data $(h,W)$. To that end, it is helpful to first
relate the covariant derivative $b_{i|j}$ of $b$ (with respect to
$\alpha$) to the covariant derivative $W_{i;j}$ of $W$ (with respect
to $h$).

Let
\begin{equation*}\begin{aligned}\label{dd8}
&\mathcal{R}_{ij}:=\frac{1}{2}(W_{i;j}+
W_{j;i}),\,\,\,\,\,\,\mathcal{S}_{ij}:=\frac{1}{2}(W_{i;j}-
W_{j;i}),\\
&\mathcal{S}^i_{\,\,\,j}:= h^{ik}\mathcal{S}_{kj},\,\,
\mathcal{S}_j:=
W^i\mathcal{S}_{ij},\,\,\mathcal{R}_j:=W^i\mathcal{R}_{ij},\,\,\mathcal{R}:=\mathcal{R}_jW^j,\\
\end{aligned}\end{equation*}
where ";" denotes the covariant differentiation with respect to $h$.

By conformal properties, we have followings
\begin{equation}\begin{aligned}\label{ze5}
r_{ij}&=2e^{-2\rho}(\mathcal{R}_{ij}-W^k\rho_kh_{ij}),\\
\end{aligned}\end{equation}
\begin{equation}\begin{aligned}\label{ze6}
s_{ij}&=2e^{-2\rho}(\mathcal{S}_{ij}+\rho_iW_j-\rho_jW_i),\\
\end{aligned}\end{equation}
where $\rho_i=\frac{\partial \rho}{\partial x^i}$.


\begin{lemma} \label{zze3}$r_{00}=c(x)\alpha^2$ is equivalent to $\mathcal{R}_{ij}=0$. In this case, $W^k\rho_k=-\frac{1}{2}c$.
 \end{lemma}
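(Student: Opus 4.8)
The plan is to convert the $\alpha$-level condition $r_{00}=c\alpha^2$ directly into an $h$-level condition on $\mathcal{R}_{ij}$ by means of the conformal dictionary \eqref{ze5} together with the relation $h_{ij}=e^{2\rho}a_{ij}$ established in \eqref{ze4}. Substituting $h_{ij}=e^{2\rho}a_{ij}$ into \eqref{ze5} and cancelling the conformal factor in the second term yields the clean identity $r_{ij}=2e^{-2\rho}\mathcal{R}_{ij}-2(W^k\rho_k)a_{ij}$. This single tensorial identity will drive both implications of the equivalence.

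For the forward direction I would assume $\mathcal{R}_{ij}=0$. Then the displayed identity collapses to $r_{ij}=-2(W^k\rho_k)a_{ij}$, so that $r_{00}=r_{ij}y^iy^j=-2(W^k\rho_k)\alpha^2$. Reading off $c=-2W^k\rho_k$ gives $r_{00}=c\alpha^2$ and simultaneously the asserted supplementary relation $W^k\rho_k=-\tfrac12 c$. This direction is essentially immediate.

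For the converse I would start from $r_{00}=c\alpha^2$ and polarize: since $r_{ij}$ and $a_{ij}$ are symmetric in $(i,j)$ and $r_{ij}y^iy^j=c\,a_{ij}y^iy^j$ holds for all $y$, differentiating twice in $y$ gives $r_{ij}=c\,a_{ij}$. Feeding this back into $r_{ij}=2e^{-2\rho}\mathcal{R}_{ij}-2(W^k\rho_k)a_{ij}$ and using $a_{ij}=e^{-2\rho}h_{ij}$ yields $\mathcal{R}_{ij}=\tfrac12(c+2W^k\rho_k)h_{ij}$; that is, $\mathcal{R}_{ij}$ is pointwise a scalar multiple of $h_{ij}$.

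The main obstacle is exactly here: proportionality to $h_{ij}$ does not by itself force $\mathcal{R}_{ij}=0$, so I must show the scalar factor vanishes, and this is where the hypothesis $\|W\|_h=1$ is indispensable. Differentiating $h_{ij}W^iW^j=1$ covariantly with respect to $h$ gives $W^iW_{i;j}=0$ for every $j$; contracting once more with $W^j$ and recalling the paper's notation $\mathcal{R}=\mathcal{R}_jW^j=W^iW^j\mathcal{R}_{ij}$ produces $\mathcal{R}=W^iW^jW_{i;j}=0$. Contracting $\mathcal{R}_{ij}=\tfrac12(c+2W^k\rho_k)h_{ij}$ with $W^iW^j$ and using $\|W\|_h=1$ then gives $0=\mathcal{R}=\tfrac12(c+2W^k\rho_k)$, so the scalar factor is zero. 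Hence $\mathcal{R}_{ij}=0$ and $W^k\rho_k=-\tfrac12 c$, which closes the equivalence together with the claimed identity.
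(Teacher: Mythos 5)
Your proof is correct and follows essentially the same route as the paper: both arguments rest on the conformal identity \eqref{ze5} combined with the unit-norm normalization of $W$ (equivalently $e^{2\rho}b^2=4$). The only cosmetic difference is one of ordering --- the paper first extracts $W^k\rho_k=-\tfrac{1}{2}c$ by differentiating $e^{2\rho}b^2=4$ in the $\alpha$-connection and contracting with $b^k$, whereas you first obtain $\mathcal{R}_{ij}$ proportional to $h_{ij}$ and then kill the proportionality factor via $\mathcal{R}=W^iW^j\mathcal{R}_{ij}=0$, which you get by differentiating $\|W\|_h^2=1$ in the $h$-connection.
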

\begin{proof} Firstly, assume that $r_{00}=c(x)\alpha^2$.
It is equivalent to $r_{ij}=ca_{ij}$. Contracting both sides of it
with $b^ib^j$, we have $r=cb^2$.

By the third equation of \eqref{ze4}, we have
\begin{equation}\label{ze66}
0=b^2\rho_k+r_k+s_k.
\end{equation}
Contracting \eqref{ze66} with $b^k$ yields
\begin{equation*}\begin{aligned}
0&=b^2\rho_kb^k+r=2b^2\rho_kW^k+cb^2.\\\end{aligned}\end{equation*}
 So $W^k\rho_k=-\frac{1}{2}c$.

Then plugging \eqref{ze5} into $r_{ij}=ca_{ij}$, we get
\begin{equation}\begin{aligned}\label{ze7}
ce^{-2\rho}h_{ij}&=2e^{-2\rho}(\mathcal{R}_{ij}-W^k\rho_kh_{ij})\\
&=2e^{-2\rho}(\mathcal{R}_{ij}+\frac{1}{2}ch_{ij}).\\
\end{aligned}\end{equation}
Obviously $\mathcal{R}_{ij}=0$.

Conversely, by $\mathcal{R}_{ij}=0$ and \eqref{ze5}, we have
\begin{equation*}\begin{aligned}
r_{ij}&=-2e^{-2\rho}W^k\rho_kh_{ij}.\\
\end{aligned}\end{equation*}
That is $r_{ij}=ca_{ij}$, where $c=c(x)=-2W^k\rho_k$. This completes
the proof.
 \end{proof}
\vskip 5mm
\begin{theorem}\label{zze4}
Let $F=\frac{\alpha^2}{\beta}$ be a non-Riemannian Kropina metric on
an n-dimensional manifold $M$, $n\geq 2$. Assume the pair $(h,W)$ is
it's navigation data. Then $F$ is an Einstein metric if and only if
$h$ is an Einstein metric and $W$ is a unit Killing vector field. In
this case, $\sigma=\delta\geq0$, where $\delta=\delta (x)$ is the
Einstein scalar of $h$. Moreover, $F$ is Ricci constant for
$n\geq3$.
\end{theorem}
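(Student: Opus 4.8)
The plan is to translate the Einstein Kropina characterization of Theorem~\ref{zza1} (expressed in terms of $\alpha$ and $\beta$) into the navigation data $(h,W)$, using the conformal relations \eqref{ze4}, \eqref{ze5}, \eqref{ze6} and Lemma~\ref{zze3}. Since $F$ is assumed Einstein, the very first equation of both \eqref{za2} and \eqref{za3} gives $r_{00}=c\alpha^2$, which by Lemma~\ref{zze3} is equivalent to $\mathcal{R}_{ij}=0$, i.e.\ $W$ is Killing with respect to $h$; moreover $W^k\rho_k=-\tfrac12 c$, and since $\|W\|_h=1$ is built into the navigation data, $W$ is automatically a \emph{unit} Killing field once $\mathcal{R}_{ij}=0$. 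So the ``$W$ is a unit Killing vector field'' half of the conclusion should fall out almost immediately from the first of the Einstein equations.

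First I would establish the dictionary between the $\alpha$-quantities $s_{ij}, s^i_{\,\,j}, s_i, s^k_{\,\,0|k}$, etc., and the corresponding $h$-quantities $\mathcal{S}_{ij}, \mathcal{S}^i_{\,\,j}, \mathcal{S}_j$, etc., under the conformal change $h_{ij}=e^{2\rho}a_{ij}$ with the constraint $e^{2\rho}b^2=4$ and $2W_i=e^{2\rho}b_i$. The key simplification is that once $\mathcal{R}_{ij}=0$ (equivalently $r_{00}=c\alpha^2$), the formula \eqref{ze6} for $s_{ij}$ collapses and the various $s$-contractions simplify dramatically. I would then relate the Ricci curvature $\overline{Ric}$ of $\alpha$ to the Ricci curvature of the conformally related $h$ via the standard conformal change formula for Ricci in dimension $n$; this is what converts ``$\alpha$ is Einstein'' into ``$h$ is Einstein.'' The goal is to show that the full system \eqref{za2}/\eqref{za3} in the $\alpha,\beta$ variables is equivalent, after this substitution, to the single statement that $h$ is Einstein (with $W$ Killing already handled), and to read off $\sigma=\delta$.

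The cleanest route, rather than grinding through the conformal Ricci formula directly, is probably to reduce to Theorem~\ref{zzd2}: once $\mathcal{R}_{ij}=0$ we have $r_{ij}=-2e^{-2\rho}W^k\rho_k h_{ij}=c a_{ij}$, and I would examine whether the extra conformal factor can be absorbed. The natural strategy is to relate the Kropina metric built on $(\alpha,\beta)$ to the Kropina metric built on its own data so that constancy of the Killing form emerges. In fact, since $\beta$ need not be a \emph{constant} Killing form for $\alpha$ (only $W$ is Killing for $h$), the honest computation is to plug the dictionary into the equations of Theorem~\ref{zza1}, separate the $n=2$ and $n\ge 3$ cases exactly as before, and verify that the remaining equations become the Einstein condition $\widetilde{Ric}_h=\delta h^2$ together with $\sigma=\delta$. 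The sign assertion $\sigma=\delta\ge 0$ and the Ricci-constancy for $n\ge 3$ should then follow by the same argument as in Theorem~\ref{zzd2}, namely $\delta b^2$ equals a nonnegative norm-squared of $\mathcal{S}_{ij}$ and Schur's lemma (the Ricci identity contraction used in Lemma~\ref{zzd1}) forces $\delta$ constant when $n\ge 3$.

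The main obstacle I anticipate is the bookkeeping in the conformal transformation of the Ricci tensor together with the $s$-curvature contractions: the factors $e^{2\rho}$, the gradient terms $\rho_i$, and the constraint $e^{2\rho}b^2=4$ must all be tracked carefully, and the terms involving $W^k\rho_k=-\tfrac12 c$ must cancel in exactly the right way to leave a clean Einstein condition on $h$ with no leftover $\rho$-dependence. The delicate point is confirming that the conformal distortion of $\overline{Ric}$ precisely matches the $s_{0|0}$, $c_0\beta$, $c^2\beta^2$ terms appearing in \eqref{za3} so that the combination collapses to Einstein-for-$h$; if those do not cancel identically one would be forced back into case analysis. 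I expect, however, that the unit-length constraint $\|W\|_h=1$ is exactly the rigidity that makes everything close up, mirroring how $\|W\|_h<1$ controls the Randers navigation problem.
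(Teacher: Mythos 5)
Your opening move --- using Lemma \ref{zze3} to convert $r_{00}=c\alpha^2$ into $\mathcal{R}_{ij}=0$, so that $W$ is automatically a unit Killing field --- is exactly the paper's first step. But the main body of your argument is an unexecuted plan, and you talk yourself out of the one observation that makes the rest of the proof short. You correctly note that $\beta$ need not be a constant Killing form for $\alpha$, and conclude from this that the ``honest'' route is to push the whole system \eqref{za2}/\eqref{za3} through the conformal change $h_{ij}=e^{2\rho}a_{ij}$, transforming $\overline{Ric}$ and all the $s$-contractions. The point you miss is that $2F=\frac{h^2}{W_0}$ is \emph{itself} a Kropina metric whose defining data is literally $(\alpha,\beta)=(h,W_0)$: no conformal change is needed, because you are free to re-read the same Finsler metric with a different $(\alpha,\beta)$-presentation, and a constant rescaling $F\mapsto 2F$ leaves the spray and hence $Ric$ unchanged. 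Once $\mathcal{R}_{ij}=0$ and $\|W\|_h=1$, the form $W_0$ is a \emph{constant} Killing form for $h$, so Theorem \ref{zzd2} applies verbatim to $\frac{h^2}{W_0}$ and yields ``Einstein iff $h$ Einstein'' with Einstein scalar $\frac14\delta\|W\|_h^2=\frac14\delta$; undoing the factor $2$ gives $Ric=\frac14\delta\bigl(\frac{h^2}{W_0}\bigr)^2=\delta F^2$, i.e.\ $\sigma=\delta$, and the nonnegativity and Ricci constancy are inherited from Theorem \ref{zzd2}. That is the paper's entire proof.

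As written, your proposal therefore has a genuine gap: the conformal bookkeeping you fall back on --- matching the conformal distortion of $\overline{Ric}$ against the $s_{0|0}$, $c_0\beta$ and $c^2\beta^2$ terms of \eqref{za3} --- is precisely the computation you would need to carry out in full, you acknowledge you have not verified that the $\rho$-dependence cancels, and nothing in the proposal guarantees it does. (It would close up, since the theorem is true, but ``I expect the terms cancel'' is not a proof.) Had you followed the reduction you yourself flagged as cleanest and aimed it at the pair $(h,W_0)$ rather than at $(\alpha,\beta)$, the obstacle you anticipate would simply not arise.
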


\begin{proof} Now assume that $F=\frac{h^2}{2W_0}$ is an Einstein metric. Then $\frac{h^2}{W_0}$ is also an Einstein Kropina metric.
 By Theorem \ref{zza1}, we have $r_{00}=c(x)\alpha^2$ for $n\geq2$. Then
by Lemma \ref{zze3}, $\mathcal{R}_{ij}=0$ holds. So $W_0$ is a unit
Killing form with respect to $h$. Thus for the Kropina metric
$\frac{h^2}{W_0}$, we know that it is Einstein and $W_0$ is a unit
Killing form. Then according to Theorem \ref{zzd2}, we know $h$ is
also an Einstein metric. Conversely, assume $h$ is an Einstein
metric and $W$ is a unit Killing vector field. Then $W_0$ is a unit
Killing form with respect to $h$. By Theorem \ref{zzd2}, we get
$\frac{h^2}{W_0}$ is an Einstein metric and so is
$F=\frac{h^2}{2W_0}$.

By Theorem \ref{zzd2}, we obtain that the Einstein scalar of
$\frac{h^2}{W_0}$ is
\begin{equation*}
\frac{1}{4}\delta\|W\|^2_h=\frac{1}{4}\delta=-\frac{1}{4}\mathcal{S}^i_{\,\,\,j}\mathcal{S}^j_{\,\,\,i}=\frac{1}{4}\parallel\mathcal{S}_{ij}\parallel^2_h\geq0,
\end{equation*}
where $\delta=\delta(x)$ is the Einstein scalar of $h$. Thus the
Einstein scalar of $F=\frac{h^2}{2W_0}$ is $\sigma=\delta\geq0$. It
completes the proof of theorem.\end{proof} \vspace{3mm}


By Theorem \ref{zze4}, we can construct a vast Einstein Kropina
metrics by their navigation expressions, i.e., Riemannian Einstein
metrics and unit Killing vector fields. Let $h$ be
$n$-dimensional Riemannian space of constant curvature $\mu$. Denote
$h={||dx||^2}/H^2$, where $H:=1+\frac {\mu}4||x||^2$ and
$||\cdot||^2$ is the standard metric in Euclidean space. Then the
general solutions of Killing vector field $W$ with respect to $h$
are
\begin{equation}\label{ze7}
W_i(x)=\frac 1{H^2}\left\{\sum_jQ_{ij}x^j+c_i-\frac 14\mu
||x||^2c_i+\frac 12\left [\sum_k\mu c_kx^k\right]x^i\right\},
\end{equation}
where $Q_{ij}=-Q_{ji}$ and $c_i$ are $\frac 12n(n+1)$ constants, see \cite{SYB}.
 So there
exist lots of unit Killing vector fields. We list a special case
here.

\begin{example} Let $M$ be an $3$-dimensional unit sphere with standard metric $h$.
Let
\begin{equation*}
Q
 =\left(\begin{array}{ccc}
 0&a&b\\
 -a&0&c\\
  -b&-c&0\\
 \end{array}\right),\qquad (c_1,c_2,c_3)=\pm(c,-b,a),
 \end{equation*}
 where $a^2+b^2+c^2=1$ and $a,b,c$ are all non-zero constants.
Define $W=W^i\frac{\partial}{\partial x^i}$ with the same form as in
\eqref{ze7}, where $W_i=h_{ij}W^j$. Then $||W||_{h}=1$. Define
$F=\frac{h^2}{2W_0}$, where $W_0=W_iy^i$ and $W_0=W_i(x)y^i>0$. Thus
$F$ is an Einstein Kropina metric.
\end{example}

For Ricci flat Kropina metric, we have the following.
\begin{corollary}\label{zze6}
Let $F=\frac{\alpha^2}{\beta}$ be a non-Riemannian Kropina metric on
an $n$-dimensional manifold $M$, $n\geq 2$. If $F$ is Ricci flat,
then $F$ is Berwald. \end{corollary}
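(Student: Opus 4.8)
The plan is to pass to the navigation data $(h,W)$ and to show that Ricci-flatness forces $W$ to be parallel with respect to $h$, after which the geodesic coefficients of $F$ collapse onto those of the Riemannian metric $h$ and are therefore quadratic. First I would note that a Ricci flat metric is in particular Einstein with $\sigma=0$. By Theorem \ref{zze4} its navigation data then satisfies: $h$ is Einstein with Einstein scalar $\delta=\sigma=0$, and $W$ is a unit Killing vector field, so $\mathcal{R}_{ij}=0$. Moreover, the scalar identity produced in the proof of Theorem \ref{zze4}, namely $\tfrac14\delta=\tfrac14\parallel\mathcal{S}_{ij}\parallel^2_h$, forces $\parallel\mathcal{S}_{ij}\parallel^2_h=0$, hence $\mathcal{S}_{ij}=0$. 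Combining the two pieces gives $W_{i;j}=\mathcal{R}_{ij}+\mathcal{S}_{ij}=0$, i.e. $W$ is parallel with respect to $h$.

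Next I would read $F$ off its navigation expression $F=\frac{h^2}{2W_0}$ as a Kropina metric whose underlying Riemannian metric is $\widetilde\alpha:=h$ and whose $1$-form is $\widetilde\beta:=2W_0$. Since $W$ is parallel, $(2W_i)_{;j}=0$, so $\widetilde\beta$ is parallel with respect to $h$; consequently the associated tensors built from $(h,2W_0)$ exactly as in Section \ref{Z3} vanish, $\widetilde r_{ij}=0$ and $\widetilde s_{ij}=0$, while $\widetilde b^2=\parallel 2W\parallel^2_h=4\neq0$ ensures that the coefficients in \eqref{zc1} contain no singular denominators. Applying Proposition \ref{zzc1} to this Kropina metric, every correction term in \eqref{zc1} carries a factor $\widetilde s^i_{\,\,0}$, $\widetilde s_0$ or $\widetilde r_{00}$ and hence vanishes, leaving $G^i=G^i_{h}$, where $G^i_{h}$ denote the geodesic coefficients of $h$.

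Since $G^i_{h}$ are quadratic in $y$, the spray coefficients of $F$ are quadratic, which is exactly the condition for $F$ to be a Berwald metric (its Berwald connection being the Levi-Civita connection of $h$), completing the argument. I expect the only delicate point to be the extraction of $\mathcal{S}_{ij}=0$ from Ricci-flatness: this is where the scalar relation of Theorem \ref{zze4} is indispensable, since Killing-ness alone gives only $\mathcal{R}_{ij}=0$. One must also verify $\widetilde b^2\neq0$ so that Proposition \ref{zzc1} can be invoked despite the singular nature of the Kropina metric.
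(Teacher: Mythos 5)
Your proposal is correct and follows essentially the same route as the paper: use Theorem \ref{zze4} to get $0=\sigma=\delta=\parallel\mathcal{S}_{ij}\parallel^2_h$, hence $\mathcal{S}_{ij}=0$, which together with the Killing condition $\mathcal{R}_{ij}=0$ makes $W$ parallel, and then the spray coefficients of $F$ reduce to those of $h$. The only difference is that you spell out the final step (via Proposition \ref{zzc1} applied to the navigation representation $\widetilde\alpha=h$, $\widetilde\beta=2W_0$) which the paper merely asserts.
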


\begin{proof} Assume that $F$ is Ricci-flat. By Theorem \ref{zze4},
we have $0=\sigma=\delta=\parallel\mathcal{S}_{ij}\parallel^2_h$,
which means that $W_0$ is closed. Thus $W_0$ is parallel with
respect to $h$. So $G^i=\widetilde{G}^i$, where $\widetilde{G}^i$
denote the geodesic coefficients of $h$. Hence $F$ is a Berwald
metric. It completes the proof of Corollary \ref{zze6}.
\end{proof}

Finsler metrics, which are of constant flag curvature, are special
cases of Einstein metrics. We have following results.
\begin{corollary}\label{zze7}[see\cite{yo1}]
Let $F=\frac{\alpha^2}{\beta}$ be a non-Riemannian Kropina metric on
an $n$-dimensional manifold $M$, $n\geq 2$. $F$ is of constant flag
curvature $K$ if and only if the following conditions hold:\par
(1)$W$ is a unit Killing vector field,\par (2)The Riemannian space $(M,
h)$ is of nonnegative constant curvature $K$.
\end{corollary}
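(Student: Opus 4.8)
The plan is to exploit the fact that constant flag curvature is a special case of the Einstein condition, and then to run the navigation machinery of Section~\ref{Z5} one level deeper than Theorem~\ref{zze4}. First I would fix the bookkeeping between the two curvature normalizations. If $F$ has constant flag curvature $K$ then $Ric=(n-1)KF^2$, so its Einstein scalar is $\sigma=(n-1)K$; likewise $(M,h)$ has constant sectional curvature $K$ precisely when $h$ is Einstein with scalar $\delta=(n-1)K$. With this dictionary the forward direction gets one condition for free: constant flag curvature $\Rightarrow$ $F$ Einstein, so by Theorem~\ref{zze4} the vector field $W$ is a unit Killing field (this is condition~(1)) and $h$ is Einstein with $\delta=\sigma=(n-1)K\geq0$, forcing $K\geq0$. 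The only remaining task in the ``only if'' direction is to upgrade \emph{``$h$ is Einstein''} to \emph{``$h$ has constant sectional curvature $K$''}, which is exactly the information the trace-level computation behind Theorem~\ref{zze4} cannot detect.

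To close that gap I would pass from the trace of the Riemann curvature to the full flag curvature. Starting from Proposition~\ref{zzc1} and the conformal relations \eqref{ze5}--\eqref{ze6}, the geodesic coefficients $G^i$ of $F$ can be rewritten in terms of the navigation data $(h,W)$; once $W$ is a unit Killing field (so that $\mathcal{R}_{ij}=0$ and $\mathcal{S}_{ij}=W_{i;j}$) these coefficients simplify sharply. The key step is to compute the Finsler Riemann curvature $R^i_{\,\,\,k}$ of $F$ from these geodesic coefficients and to express it through the Riemann curvature tensor of $h$ together with $W$ and its covariant derivatives, thereby obtaining a pointwise navigation formula for the flag curvature $K_F(x,y)$ in terms of the sectional curvatures of $h$. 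Imposing $K_F\equiv K$ and using that $W$ is Killing of unit length, this formula should force every $h$-sectional curvature to equal $K$, i.e. $(M,h)$ has constant curvature $K$, giving condition~(2).

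For the converse I would run the same navigation curvature formula in reverse: assuming $W$ is a unit Killing field and $(M,h)$ has constant sectional curvature $K\geq0$, one substitutes $\widetilde{R}_{ijkl}=K(h_{ik}h_{jl}-h_{il}h_{jk})$ into the expression for $R^i_{\,\,\,k}$ derived above. The Killing and unit-length hypotheses make the $W$-dependent correction terms collapse, and the computation should return $R^i_{\,\,\,k}=K(F^2\delta^i_{\,\,\,k}-FF_{y^k}y^i)$, the defining relation for constant flag curvature $K$. This is precisely the Zermelo-navigation mechanism by which constant-flag-curvature Finsler metrics are manufactured from constant-curvature Riemannian metrics and their Killing fields.

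The main obstacle is the middle paragraph: deriving the full navigation formula for the flag curvature of a Kropina metric. Theorem~\ref{zze4} only needed the trace $R^m_{\,\,\,m}$, whose computation is already the lengthy Proposition~\ref{zzc2}; controlling the entire curvature operator $R^i_{\,\,\,k}$ rather than its trace means retaining all the non-trace terms built from $\mathcal{S}^i_{\,\,\,j}$, the Riemann tensor of $h$, and the derivatives of $W$, and then disentangling their dependence on the flag direction $y$. Once that formula is in hand, both implications reduce to the elementary algebraic identity characterizing spaces of constant curvature, so the rest is routine.
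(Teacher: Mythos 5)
Your plan coincides with the paper's proof: it first reduces to the Einstein case via Theorem~\ref{zze4} to obtain that $W$ is a unit Killing field and $K=\sigma/(n-1)=\delta/(n-1)\geq0$, and then carries out exactly the computation you outline in your middle paragraph --- rewriting the full curvature operator $R^i_{\,\,\,k}=K(F^2\delta^i_{\,\,\,k}-g_{ij}y^jy^k)$ in terms of the navigation data $(h,W)$, clearing denominators, and using divisibility by $h^2$ together with contractions by $\tilde y_i$ and $y^k$ to force $\tilde R^i_{\,\,\,k}=K(h^2\delta^i_{\,\,\,k}-y^i\tilde y_k)$, with the converse obtained by reversing the substitution. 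The approach and the key steps are essentially identical; only the explicit algebra of the navigation curvature identity remains to be filled in.
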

\begin{proof}
Suppose that $F$ is of constant flag curvature $K$, i.e.,
\begin{equation}\label{ze11}
R^i_{\,\,\,k}=K(F^2\delta^i_{\,\,\,k}-g_{ij}y^jy^k).
\end{equation}
Then we have
\begin{equation*}
Ric=\sigma F^2,  \qquad\qquad \sigma:=(n-1)K=const,
\end{equation*}
i.e., $F$ is an Einstein metric. By Theorem \ref{zzd2}, $h$ is an
Einstein metric, $W_0$ is a unit Killing form with respect to $h$
and $\sigma=\delta\geq0$, where $\delta=\delta(x)$ is the Einstein
scalar of $h$. So $K\geq0$.

By a direct computation, we can rewrite \eqref{ze11} as
\begin{equation}\label{ze12}\begin{aligned}
&K\frac{h^4}{4W_0^2}[\delta^i_{\,\,\,k}-\frac{2}{h^2}y^i\tilde{y}_k+\frac{y^iW_k}{W_0}]\\
=&\tilde{R}^i_{\,\,\,k}-\frac{h^2}{W_0}\mathcal{S}^i_{\,\,\,0;k}
+\frac{\tilde{y}_k}{W_0}\mathcal{S}^i_{\,\,\,0;0}
-\frac{h^2}{2W_0^2}\mathcal{S}^i_{\,\,\,0;0}W_k+\frac{h^2}{2W_0}\mathcal{S}^i_{\,\,\,k;0}
+\frac{h^2}{2W_0^2}\mathcal{S}^i_{\,\,\,j}\mathcal{S}^j_{\,\,\,0}\tilde{y}_k
-\frac{h^4}{4W_0^3}\mathcal{S}^i_{\,\,\,j}\mathcal{S}^j_{\,\,\,0}W_k\\
&-\frac{h^4}{4W_0^2}\mathcal{S}^i_{\,\,\,j}\mathcal{S}^j_{\,\,\,k},
\end{aligned}\end{equation}
where $\tilde{y}_k:=h_{ik}y^i$.
 Multiplying both sides of \eqref{ze12}
by $4W_0^3$ yields
\begin{equation}\label{ze13}\begin{aligned}
0=&4W_0^3\tilde{R}^i_{\,\,\,k}-4h^2W_0^2\mathcal{S}^i_{\,\,\,0;k}
+4W_0^2\mathcal{S}^i_{\,\,\,0;0}\tilde{y}_k
-2h^2W_0\mathcal{S}^i_{\,\,\,0;0}W_k
+2h^2W_0^2\mathcal{S}^i_{\,\,\,k;0}
+2h^2W_0\mathcal{S}^i_{\,\,\,j}\mathcal{S}^j_{\,\,\,0}\tilde{y}_k\\
&
-h^4\mathcal{S}^i_{\,\,\,j}\mathcal{S}^j_{\,\,\,0}W_k-h^4W_0\mathcal{S}^i_{\,\,\,j}\mathcal{S}^j_{\,\,\,k}
-K[h^4W_0\delta^i_{\,\,\,k}-2h^2W_0y^i\tilde{y}_k+h^4y^iW_k].
\end{aligned}\end{equation}

For division reason again, we can simplify \eqref{ze13} as
\begin{equation}\label{ze14}\begin{aligned}
0=&4W_0^2\tilde{R}^i_{\,\,\,k}-4h^2W_0\mathcal{S}^i_{\,\,\,0;k}
+4W_0\mathcal{S}^i_{\,\,\,0;0}\tilde{y}_k
-2h^2\mathcal{S}^i_{\,\,\,0;0}W_k +2h^2W_0\mathcal{S}^i_{\,\,\,k;0}
+2Kh^2(W_0W^i-y^i)\tilde{y}_k\\
&-Kh^4(W^iW_k-\delta^i_{\,\,\,k})
-Kh^2(h^2\delta^i_{\,\,\,k}-2y^i\tilde{y}_k)-Kh^4W^iW_k.
\end{aligned}\end{equation}

Contracting  \eqref{ze14} with $\tilde{y}_i$ yields
\begin{equation}\label{ze15}\begin{aligned}
\mathcal{S}_{k0;0}=K(W_0\tilde{y}_k-h^2W_k).
\end{aligned}\end{equation}
From it, we have
\begin{equation}\label{ze16}\begin{aligned}\begin{cases}
&\mathcal{S}^i_{\,\,\,0;k}=-\mathcal{S}^i_{\,\,\,k;0}-K(2W^i\tilde{y}_k-y^iW_k-W_0\delta^i_{\,\,\,k}),\\
&\mathcal{S}^i_{\,\,\,0;0}=K(-h^2W^i+W_0y^i).
\end{cases}\end{aligned}\end{equation}

Plugging \eqref{ze15} and \eqref{ze16} into \eqref{ze14} yields
\begin{equation}\label{ze17}\begin{aligned}
0=&2W_0^2\tilde{R}^i_{\,\,\,k}+3h^2W_0\mathcal{S}^i_{\,\,\,k;0}
+3Kh^2W_0W^i\tilde{y}_k-3Kh^2W_0y^iW_k
-2Kh^2W_0^2\delta^i_{\,\,\,k}+2KW_0^2y^i\tilde{y}_k.
\end{aligned}\end{equation}
The above equation shows that $h^2$ divides
$2W_0^2\tilde{R}^i_{\,\,\,k}
-2Kh^2W_0^2\delta^i_{\,\,\,k}+2KW_0^2y^i\tilde{y}_k=2W_0^2(\tilde{R}^i_{\,\,\,k}
-Kh^2\delta^i_{\,\,\,k}+Ky^i\tilde{y}_k)$. Thus there exists some
function $d^i_{\,\,\,k}=d^i_{\,\,\,k}(x)$ on $M$ such that

\begin{equation}\label{ze18}\begin{aligned}
\tilde{R}^i_{\,\,\,k}
-Kh^2\delta^i_{\,\,\,k}+Ky^i\tilde{y}_k=d^i_{\,\,\,k}h^2.
\end{aligned}\end{equation}
Contracting  \eqref{ze18} with $y^k$ yields $d^i_{\,\,\,k}=0$. Hence
\eqref{ze18} can be simplified as
$\tilde{R}^i_{\,\,\,k}=K(h^2\delta^i_{\,\,\,k}-y^i\tilde{y}_k)$,
which means that $h$ is of constant curvature $K$.

Converse is obvious.
\end{proof}
{\bf {Remark.}}\, R. Yoshikawa, etc., also studied Kropina metrics
of constant flag curvature in terms of $(h,W)$. Their computation is
tedious. Corollary \ref{zze7} is the revised version of Theorem 4 of
\cite{yo1}, which does not restrict nonnegative constant curvature
$K$.

{\bf {Remark.}}\, R. Yoshikawa, etc., also studied Kropina metrics
of constant flag curvature in terms of $(h,W)$. Their computation is
tedious. Corollary \ref{zze7} is the revised version of Theorem 4 of
\cite{yo1}, which does not restrict nonnegative constant curvature
$K$.

 \vspace{8mm}

\section{ S-curvature} \label{Z6}
Let $(M,F)$ be an $n$-dimensional positive definite Finsler space,
$n\geq3$. Let  $\{e_i\}^n_{ i=1}$ be an arbitrary basis for $T_xM$
and $\{\theta^i\}^n _{i=1}$ the dual basis for $T^*_xM$. The
Busemann-Hausdorff volume form is defined by
\begin{equation*} dV_F := \sigma_F \theta^1\wedge ...\wedge
\theta^n,
 \end{equation*}
 where
 \begin{equation*}
\sigma_F := \frac{Vol(B^n(1))}{Vol\{(y^i)\in R^n|F(y^ie_i) < 1\}},
 \end{equation*}
 $Vol$ denotes the Euclidean volume and $Vol(B^n(1))$ denotes the
Euclidean volume of the unit ball in $\Bbb R^n$. The
Busemann-Hausdorff volume form $dV_F $ determines a measure
$\mu_{B-H}$ which is called the Busemann-Hausdorff measure.

Consider a Kropina norm $F=\frac{\alpha^2}{\beta}$ on $M$. We denote
by $dV_F=\sigma_F \theta^1\wedge ...\wedge \theta^n$ and
$dV_{\alpha}=\sigma_{\alpha} \theta^1\wedge ...\wedge \theta^n$ the
volume forms of $F$ and $\alpha$, respectively. Let $\{e_i\}^n_{
i=1}$ be an orthogonal basis for $(T_xM,\alpha)$. Thus
$\sigma_{\alpha}=\sqrt{det(a_{ij})} = 1$. We may assume $\beta=
by^1$. Then
\begin{equation*}
\Omega:= \{(y^i)\in R^n|F(y^ie_i) < 1\}
\end{equation*}
 is a convex body in $R^n$ and
$\sigma_F := \frac{Vol(B^n(1))}{Vol(\Omega)}$. $\Omega$ is given by
\begin{equation*}
\{\frac{2}{b}(y^1-\frac{b}{2})\}^2+\sum^n_{\alpha=2}(\frac{2}{b}y^{\alpha})^2<1.
 \end{equation*}
 Consider the following coordinate transformation $\psi:(y^i
)\rightarrow (u^i )$
\begin{equation*}
u^1:=\frac{2}{b}(y^1-\frac{b}{2}),~~~u^{\alpha}:=\frac{2}{b}y^{\alpha}.
\end{equation*}
$\psi$ sends $\Omega$ onto the unit ball $B^n(1)$ and the Jacobian
of $\psi:(y^i )\rightarrow (u^i )$ is $(\frac{2}{b})^n$. Then
\begin{equation*}
Vol(B^n(1))=\int_{B^n(1)}du^1\ldots
du^n=\int_{\Omega}(\frac{2}{b})^ndy^1\ldots
dy^n=(\frac{2}{b})^nVol(\Omega).
 \end{equation*}
Thus
\begin{equation*}
\sigma_F= \frac{Vol(B^n(1))}{Vol(\Omega)}=(\frac{2}{b})^n.
 \end{equation*}

Hence for a general basis $\{e_i\}^n_{ i=1}$ , we have
\begin{equation*}\label{v}
\sigma_F
:=(\frac{2}{b})^n\sigma_{\alpha},~~~~\sigma_{\alpha}=\sqrt{det(a_{ij})}.
 \end{equation*}
Therefore
\begin{equation*}
dV_F :=(\frac{2}{b})^ndV_{\alpha}.
 \end{equation*}

Take an arbitrary standard local coordinate system $(x^i ,y^i )$.
For a non-zero vector $y\in T_xM$, the distortion $\tau=\tau(x, y)$
is defined by
\begin{equation*}
\tau:=\ln\frac{\sqrt{g_{ij}(x,y)}}{\sigma_F(x)}.
 \end{equation*}
$F$ is Riemannian if and only if $\tau=$constant. In general, $\tau$
is not a constant. However, it can be constant along any geodesic,
but the Finsler metric is not Riemannian. Therefore, it is natural
to study the rate of change of the distortion along geodesics. For a
vector $y\in T_xM \backslash\{0\}$, let $c(t)$ be the geodesic with
$c(0)=x$ and $\dot{c}(0)=y$. The $S$-curvature $S$ is defined by
\begin{equation*}
S(x, y):=\frac{d}{dt}[\tau(c(t),\dot{c}(t))]\mid_{t=0}.
 \end{equation*}
We can rewrite it as
\begin{equation}\label{zf1}
S(x, y)=\frac{\partial G^m}{\partial y^m}-y^m\frac{\partial
\ln\sigma_F }{\partial x^m}.
 \end{equation}

In this section we discuss the $S$-curvature with respect to the
Busemann-Hausdorff volume measure $\mu_{B-H}$.
\begin{proposition}\label{zzf1}
For the Kropina metric $F=\frac{\alpha^2}{\beta}$, we have
\begin{equation}\label{zf2}
S(x, y)=\frac{n+1}{b^2}(r_0-\frac{\beta}{\alpha^2}r_{00}).
 \end{equation}\end{proposition}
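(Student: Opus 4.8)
The plan is to evaluate the defining identity \eqref{zf1} directly, splitting the spray into its Riemannian part and the correction term. Writing $G^m=\bar{G}^m+T^m$ exactly as in the proof of Proposition \ref{zzc2}, and recalling the elementary fact that a Riemannian metric has vanishing $S$-curvature, so that $\partial\bar{G}^m/\partial y^m=y^m\,\partial(\ln\sigma_\alpha)/\partial x^m$, the formula \eqref{zf1} collapses to
\begin{equation*}
S(x,y)=\frac{\partial T^m}{\partial y^m}-y^m\frac{\partial}{\partial x^m}\ln\frac{\sigma_F}{\sigma_\alpha}.
\end{equation*}
Since we have already shown $\sigma_F=(2/b)^n\sigma_\alpha$, the second term reduces to $n\,y^m\,\partial(\ln b)/\partial x^m$. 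As $b^2$ is a scalar function on $M$, its coordinate derivative coincides with its $\alpha$-covariant derivative, so the identity $b^2_{\,\,|k}=2(r_k+s_k)$ gives $y^m\,\partial(\ln b)/\partial x^m=(r_0+s_0)/b^2$. Thus the whole problem is reduced to computing the single vertical divergence $\partial T^m/\partial y^m$.

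To compute that divergence I would differentiate each of the three pieces of
\begin{equation*}
T^m=-\frac{\alpha^2}{2\beta}s^m_{\,\,0}+\frac{1}{2b^2}\Big(\frac{\alpha^2}{\beta}s_0+r_{00}\Big)b^m-\frac{1}{b^2}\Big(s_0+\frac{\beta}{\alpha^2}r_{00}\Big)y^m,
\end{equation*}
using $\partial\alpha^2/\partial y^m=2y_m$, $\partial\beta/\partial y^m=b_m$, $\partial s^m_{\,\,0}/\partial y^k=s^m_{\,\,k}$, $\partial s_0/\partial y^m=s_m$ and $\partial r_{00}/\partial y^m=2r_{m0}$. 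The essential simplifications come from the algebraic contractions $s^m_{\,\,m}=0$, $y_m s^m_{\,\,0}=s_{00}=0$, $b^m s_m=0$, $b_m s^m_{\,\,0}=s_0$ and $b^m r_{m0}=r_0$, which annihilate most cross terms; in particular the two contributions proportional to $\alpha^2 s_0/\beta^2$ cancel against each other. The third piece is the cleanest: the scalar coefficient of $y^m$ there is homogeneous of degree one in $y$, so Euler's relation makes its vertical divergence equal to $(n+1)$ times that coefficient. Assembling the three contributions yields
\begin{equation*}
\frac{\partial T^m}{\partial y^m}=\frac{1}{b^2}\Big(r_0-n\,s_0-(n+1)\frac{\beta}{\alpha^2}r_{00}\Big).
\end{equation*}

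Finally I substitute this into the collapsed formula for $S$, adding $n(r_0+s_0)/b^2$; the $s_0$-terms combine as $-n\,s_0+n\,s_0=0$, while $r_0+n\,r_0=(n+1)r_0$, producing exactly \eqref{zf2}. There is no conceptual difficulty here—the argument is a direct computation. The only genuine care required is the bookkeeping in the vertical divergence: tracking which contractions vanish by the antisymmetry of $s_{ij}$, and correctly exploiting the degree-one homogeneity in the $y^m$-term, so that the spurious $\alpha^2/\beta^2$ terms cancel and the surviving coefficients assemble into the common factor $n+1$.
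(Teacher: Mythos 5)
Your proposal is correct and follows essentially the same route as the paper: decompose $G^m=\bar{G}^m+T^m$, invoke the vanishing of the Riemannian $S$-curvature together with $\sigma_F=(2/b)^n\sigma_\alpha$, and compute the vertical divergence $\partial T^m/\partial y^m=\frac{1}{b^2}\bigl(r_0-n\,s_0-(n+1)\frac{\beta}{\alpha^2}r_{00}\bigr)$, which is exactly the paper's equation \eqref{zf3}. All the contractions you list and the cancellation of the $\alpha^2 s_0/\beta^2$ terms check out, so the argument is sound.
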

\begin{proof}By Proposition \ref{zzc1}, we have
\begin{equation}\label{zf3}
\begin{aligned}
\frac{\partial G^m}{\partial y^m}=&\frac{\partial
\bar{G}^m}{\partial
y^m}-\frac{n}{b^2}s_0+\frac{1}{b^2}r_0-\frac{n+1}{b^2\alpha^2}\beta
r_{00}.
 \end{aligned}\end{equation}

It is known that $S$-curvature of every Riemannian metric vanishes,
i.e.,
\begin{equation}\label{zf4}
0=\frac{\partial \overline{G}^m}{\partial y^m}-y^m\frac{\partial
\ln\sigma_{\alpha} }{\partial x^m}.
 \end{equation}
So plugging \eqref{zf3} and \eqref{zf4} into \eqref{zf1}, we get
\eqref{zf2}. This proves the proposition.\end{proof}

\begin{theorem} \label{zza4}Every Einstein Kropina metric
$F=\frac{\alpha^2}{\beta}$ has vanishing $S$-curvature.\end{theorem}
\begin{proof} Assume that $F$ is an
Einstein metric. By Theorem \ref{zza1}, we have $r_{00}=c\alpha^2$
for some scalar function $c=c(x)$ on $M$. Thus $r_{0}=c\beta$.
Plugging those into \eqref{zf2}, we obtain $S=0$. \end{proof}

\vspace{8mm}

\section{conformal rigidity}\label{Z7}
In this section, we obtain a conformal rigidity result for Einstein
Kropina
 metrics.
\begin{theorem}\label{zzh5}
Any conformal map between Einstein Kropina spaces must be homothetic.
\end{theorem}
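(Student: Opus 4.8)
The plan is to exploit the vanishing of the $S$-curvature established in Theorem \ref{zza4}. After pulling back by the underlying diffeomorphism, a conformal map between two Einstein Kropina spaces is a conformal change $\bar F = e^{\kappa(x)}F$ turning one Einstein Kropina metric $F=\frac{\alpha^2}{\beta}$ into another Einstein Kropina metric $\bar F=\frac{\bar\alpha^2}{\bar\beta}$ on the same manifold $M$; the goal is to show $\kappa$ is constant. First I would fix the compatible decomposition $\bar a_{ij}=e^{2\kappa}a_{ij}$, $\bar b_i=e^{\kappa}b_i$, for which $\bar\alpha^2/\bar\beta=e^{\kappa}\alpha^2/\beta=\bar F$, and record the invariances $\bar b^2=b^2$ and $\bar\beta/\bar\alpha^2=e^{-\kappa}\beta/\alpha^2$.

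Since both $F$ and $\bar F$ are Einstein, Theorem \ref{zza4} gives $S=\bar S=0$. Feeding $S=0$ into Proposition \ref{zzf1} yields $r_0-\frac{\beta}{\alpha^2}r_{00}=0$, i.e. $\alpha^2 r_0=\beta r_{00}$ (consistent with $r_{00}=c\alpha^2$, $r_0=c\beta$ from Theorem \ref{zza1}). The key computation is the conformal transformation of the covariant derivative of $\beta$. Using the standard change of Christoffel symbols under $\bar a_{ij}=e^{2\kappa}a_{ij}$, one finds
\[
\bar b_{i|j}=e^{\kappa}\bigl(b_{i|j}-\kappa_i b_j+a_{ij}\,b^k\kappa_k\bigr),
\]
whence, with $\kappa_i=\partial\kappa/\partial x^i$ and $\kappa_0=\kappa_iy^i$,
\[
\bar r_{00}=e^{\kappa}\bigl(r_{00}-\kappa_0\beta+\alpha^2\,b^k\kappa_k\bigr),\qquad
\bar r_0=r_0-\tfrac12 b^2\kappa_0+\tfrac12\beta\,b^k\kappa_k .
\]

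Substituting these together with $\bar b^2=b^2$ and $\bar\beta/\bar\alpha^2=e^{-\kappa}\beta/\alpha^2$ into the formula of Proposition \ref{zzf1} applied to $\bar F$, and using $r_0-\frac{\beta}{\alpha^2}r_{00}=0$ to cancel the unbarred contribution, the condition $\bar S=0$ reduces (after multiplying by $2\alpha^2$) to the homogeneous polynomial identity
\[
2\beta^2\kappa_0=\alpha^2\bigl(b^2\kappa_0+(b^k\kappa_k)\beta\bigr).
\]
The conclusion then follows from the irreducibility principle used throughout the paper: $\alpha^2$ is irreducible while $\beta^2$ factors into linear forms, so $\alpha^2\mid\kappa_0$; since $\kappa_0$ has degree one in $y$, this forces $\kappa_0\equiv 0$, hence $\kappa_i=0$ and $\kappa$ is constant, i.e. the map is homothetic. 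The main obstacle I expect is purely the bookkeeping: getting the conformal transformation of $b_{i|j}$, and therefore of $r_{00}$ and $r_0$, exactly right, since a stray sign or factor would corrupt the final identity. The divisibility step itself is immediate once that identity is in hand.
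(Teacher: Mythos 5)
Your proof is correct, but it follows a genuinely different route from the paper's. The paper passes to the navigation data: it shows that the conformal change $\widetilde F=\phi^{-1}F$ induces $\tilde h_{ij}=\phi^{-2}h_{ij}$ and $\widetilde W_i=\phi^{-1}W_i$, invokes Theorem \ref{zza3} to conclude that both $W$ and $\widetilde W$ are Killing, and then the conformal transformation law for $W_{j;k}+W_{k;j}$ collapses to $0=W_j\phi_k+W_k\phi_j-2W^i\phi_i h_{jk}$, from which two contractions (with $h^{jk}$, then with $W^j$, using $\|W\|_h=1$) give $\phi_k=0$. You instead stay entirely in the $(\alpha,\beta)$ data and use only the weaker consequence $S=\bar S=0$ of Theorem \ref{zza4} together with the explicit $S$-curvature formula of Proposition \ref{zzf1}; your conformal transformation formulas for $\bar b_{i|j}$, $\bar r_{00}$, $\bar r_0$ check out, the resulting identity $2\beta^2\kappa_0=\alpha^2\bigl(b^2\kappa_0+(b^k\kappa_k)\beta\bigr)$ is what one gets after multiplying by $2\alpha^2$, and the divisibility argument ($\alpha^2$ irreducible cannot divide a nonzero product of linear forms) forces $\kappa_0\equiv 0$ in all dimensions $n\geq 2$. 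What each approach buys: the paper's argument is a clean tensorial computation once the navigation machinery of Section \ref{Z5} is in place, and it makes transparent that the obstruction is the unit Killing field; yours avoids the navigation description entirely, needs only the conformality condition $r_{00}=c\alpha^2$ (equivalently $S=0$) for both metrics rather than the full Einstein hypothesis, and reuses the irreducibility/divisibility technique already employed throughout the paper. The one point worth stating explicitly in a write-up is that the $S$-curvature (with the Busemann--Hausdorff volume) is an invariant of the Finsler metric, so applying Proposition \ref{zzf1} to the particular representative $\bar\alpha^2=e^{2\kappa}\alpha^2$, $\bar\beta=e^{\kappa}\beta$ of $\bar F$ is legitimate; with that remark your argument is complete.
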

\begin{proof}

Let $F=\alpha^2/\beta $, $\widetilde{F}=\phi^{-1}F$ and
$\widetilde{F}=\widetilde{\alpha}^2/\widetilde{\beta}$. Then
$\widetilde{a}_{ij}=\phi^{-2}a_{ij}$ and
$\widetilde{b_{i}}=\phi^{-1}b_{i}$ hold. Let $(h,W)$ and
$(\tilde{h},\widetilde{W})$ be the navigation
 data of $F$ and $\widetilde{F}$, respectively. Suppose that
 $\tilde{h}_{ij}=e^{2\tilde{\rho}}\tilde{a}_{ij}$ and $h_{ij}=e^{2\rho}a_{ij}$ hold.
 So we have
\begin{equation}\begin{cases}\label{zh1}
\begin{aligned}
&\tilde{b}^2=\tilde{a}^{ij}\tilde{b}_i\tilde{b}_j=a^{ij}b_ib_j=b^2,\\
&\tilde{h}_{ij}=e^{2\tilde{\rho}}\tilde{a}_{ij}=e^{2\tilde{\rho}}\phi^{-2}a_{ij}=e^{2(\tilde{\rho}-\rho)}\phi^{-2}h_{ij},\\
&2\widetilde{W}_i=e^{2\tilde{\rho}}\tilde{b}_{i}=e^{2\tilde{\rho}}\phi^{-1}b_{i}=2e^{2(\tilde{\rho}-\rho)}\phi^{-1}W_{i},\\
\end{aligned}\end{cases}
\end{equation}

From \eqref{ze4} and the first equation of \eqref{zh1}, we get that
$\tilde{\rho}=\rho$. So the last two equations of \eqref{zh1} can be
simplified as
\begin{equation}\begin{cases}\label{zh2}
\begin{aligned}
&\tilde{h}_{ij}=\phi^{-2}h_{ij},\\
&\widetilde{W}_i=\phi^{-1}W_{i},\\
\end{aligned}\end{cases}
\end{equation}
which means that two Riemannian metrics $h$ and $\tilde{h}$ are
conformal equivalent.

Firstly by conformal properties, we know that
\begin{equation*}
\tilde{\gamma}^i_{jk}=\gamma^i_{jk}-\phi^{-1}\delta^i_j\phi_k-\phi^{-1}\delta^i_k\phi_j+\phi^{-1}\phi^ih_{jk},
\end{equation*}
where $\gamma^i_{\,\,\,jk}$ and $\tilde{\gamma} ^i_{\,\,\,jk}$ are
the coefficients of Levi-Civita connections of $h$ and $\tilde{h}$,
respectively, $\phi_k:=\frac{\partial \phi}{\partial x^k}$ and
$\phi^k:=h^{ik}\phi_i$.

Let $";"$ and $ "," $ denote the covariant differentiation with
respect to $h$ and $\tilde{h}$, respectively. Thus we have
\begin{equation*}
\begin{aligned}
\widetilde{W}_{j,k}&=\frac{\partial \widetilde{W}_j}{\partial
x^k}-\widetilde{W}_i\tilde{\gamma}^i_{jk}=\phi^{-1}W_{j;k}+\phi^{-2}\phi_jW_k-\phi^{-2}W_i\phi^ih_{jk}.\\
\end{aligned}
\end{equation*}
Hence\begin{equation}\label{zh3}
\widetilde{W}_{j,k}+\widetilde{W}_{k,j}=\phi^{-1}(W_{j;k}+W_{k;j})+\phi^{-2}(W_j\phi_k+W_k\phi_j)
-2\phi^{-2}W^i\phi_ih_{jk}.\end{equation}

Assume that $F$ and $\tilde{F}$ are both Einstein metrics. Thus by
Theorem \ref{zza3}, we know that $W$ and $\widetilde{W}$ are both
constant Killing vector fields. That is $0=W_{j;k}+W_{k;j}$ and
$0=\widetilde{W}_{j,k}+\widetilde{W}_{k,j}$. Hence \eqref{zh3} can
be rewritten as
\begin{equation}\label{zh4}
0=W_j\phi_k+W_k\phi_j -2W^i\phi_ih_{jk}.\end{equation}

Contracting \eqref{zh4} with $h^{jk}$ yields $W^i\phi_i=0$. Putting
it into \eqref{zh4} gets $0=W_j\phi_k+W_k\phi_j$. Then contacting it
with $W^j$ yields $\phi_k=0$, which means that $\phi$=constant. It
completes proof of Theorem \ref{zza5}.\end{proof}

\vskip 5mm

  Xiaoling Zhang

  Department of Mathematics,

  Zhejiang University,

  Hangzhou 310027, China,

  Email: xlzhang@ymail.com
 \vskip 5mm

  Yibing Shen

  CMS and Dept. of Math.,

  Zhejiang University,

  Hangzhou 310027, China,

  Email: yibingshen$@$zju.edu.cn

 \end{document}